\documentclass[12 pt,a4paper]{amsart} 
\usepackage{amsfonts,amssymb,amscd,amsmath,enumerate,verbatim,calc} 
\usepackage{float}
\usepackage{amsthm}

\newtheorem{theorem}{Theorem}[section]
\newtheorem{definition}{Definition}[section]

\newtheorem{lemma}[theorem]{Lemma}
\newtheorem{corollary}[theorem]{Corollary}
\newtheorem{proposition}[theorem]{Proposition}
\newtheorem{remark}[theorem]{Remark}

\usepackage{mathtools}

\numberwithin{equation}{section}

\usepackage{amsfonts}

\textwidth = 15.5 cm 
\textheight = 26 cm 
\topmargin = -1cm 
\oddsidemargin = 1 cm 
\evensidemargin = 1 cm 
\pagestyle{plain} 
\begin{document}
	
		\title{Lie nilpotency and Lie solvability of tensor product of multiplicative Lie algebras}
			\author{Deepak Pal$^{1}$, Amit Kumar$^{2}$, Sumit Kumar Upadhyay$^{3}$ and Seema Kushwaha$^{4}$\vspace{.4cm}\\
		{Department of Applied Sciences,\\ Indian Institute of Information Technology Allahabad\\Prayagraj, U. P., India} }
	
	\thanks{$^1$deepakpal5797@gmail.com, $^2$amitiiit007@gmail.com, $^3$upadhyaysumit365@gmail.com, $^4$seema28k@gmail.com}
\thanks {2020 Mathematics Subject classification :  20F14, 20F19}
\keywords{Multiplicative Lie algebra, Action of multiplicative Lie algebra, Tensor product}
	\begin{abstract} In this article, we discuss Lie nilpotency  and Lie solvability of non-abelian tensor product of multiplicative Lie algebras. In particular, for giving information concerning the Lie nilpotency (or Lie solvability) of either multiplicative Lie algebras $G$ or $H,$ the non-abelian tensor product $ \frac{G\otimes H}{I}$  is Lie nilpotent (or Lie solvable), for some ideal $I$ of ${G\otimes H}$.
		
	\end{abstract}
	\maketitle 
	
	\section{Introduction}
 	The non-abelian tensor square of a group was introduced by R.K. Dennis \cite{RK} in a search for ``homology functors having a close relationship to K-theory". In \cite{RJ}, R. Brown and J.-L. Loday  gave a topological significance for the tensor square and defined the tensor product of two distinct groups acting ``compatibly" on each other. In \cite{RDE} R. Brown, D.L. Johnson and E.F. Robertson focus on the group theoretic properties of the non-abelian tensor square. In particular, they discussed the nilpotency and solvability of non-abelian tensor squares of groups. The concepts of nilpotency class and solvability length provide a systematic way to categorize groups based on their structural properties.  In 1999, M. P. Visscher \cite{MP} provided bounds on the nilpotency class and solvability length of  non-abelian tensor product.
 
 G.J. Ellis \cite{GJ1} introduced the notion of the non-abelian  tensor product for a pair of Lie algebras and investigated some of its fundamental properties. Furthermore, in \cite{AHHB}, Salemkar et al. studied some common properties between Lie algebras and their tensor products, and presented some bounds on the nilpotency class and solvability length of  non-abelian  tensor product.
 
 In 1993, Ellis \cite{GJ} introduced the notion of multiplicative Lie algebra which generalizes the concept of groups and Lie rings. It has been interesting to generalize the results on groups and Lie rings to multiplicative Lie algebras. In \cite{GNM}, Donadze et al. introduced the concept of non-abelian tensor product of multiplicative Lie algebras. They proved that this notion recovers the existing  notions of non-abelian tensor products of groups as well as of Lie algebras.  A natural question arises regarding the properties of the non-abelian tensor product $G \otimes H$ for two multiplicative Lie algebras $G$ and $H$. Specifically, one may inquire about the implications for nilpotency and solvability of $G \otimes H$ based on the corresponding properties of $G$ and $H$. In their study \cite{GLW}, Donadze et al. explored the nilpotency and solvability of the non-abelian tensor product of multiplicative Lie algebras, employing the concept of nilpotency introduced by Point and Wantiez in \cite{FP}.
 On a group $G$, there is two multiplicative Lie algebra structures viz. the trivial one given by $x\star y = 1$ and the improper given by $x\star y = [x,y]$ for all $x,y \in G$. It is of interest to get smallest ideal $I$ of $G$ such that the induced structure on $\frac{G}{I}$ is improper. To do so,  in 2021,  Pandey and Upadhyay \cite{MRS} developed a different notion of solvability and nilpotency of multiplicative Lie algebras with the help of multiplicative Lie center and Lie commutator and named it as Lie solvability  and Lie nilpotency.
 
 In this article, we go with the notion of solvability and nilpotency given in \cite{MRS} and discuss about Lie nilpotency and Lie solvability of the non-abelian tensor product of multiplicative Lie algebras . As our main results, we provide upper bounds on the Lie nilpotency class and Lie solvability length of $\frac{G\otimes H}{I},$ for some ideal $I$ of $G\otimes H,$ in terms of the Lie nilpotency class and Lie solvability length of $H$.      
 	
	\section{Preliminaries} 
We start the section with the definition of multiplicative Lie algebra, and some basics of Lie solvability  and Lie nilpotency. After that we recall the tensor product of groups as well as multiplicative Lie algebras. 
 	
\begin{definition}
	A multiplicative Lie algebra is a triple $ (G,\cdot,\star), $ where $G$ is a set, $ \cdot $ and $ \star $ are two binary operations on $G$ such that $ (G,\cdot) $ is a group (need not be abelian) and for all  $x, y, z  \in  G$, the following identities hold:
\end{definition} 
	\begin{enumerate}
		\item $ x\star x=1 $ , 
		\item $ x\star(y \cdot z)=(x\star y)\cdot{^y(x\star z)} $, 
		\item $ (x \cdot y)\star z= {^x(y\star z)} \cdot (x\star z) $ ,
		\item $ ((x\star y)\star^yz)((y\star z)\star^zx)((z\star x)\star ^xy)=1 $ ,
		\item $ ^z(x\star y)=(^zx\star ^zy)$ 
	\end{enumerate} where $^zx$ denotes the element $zxz^{-1}$.
	We  say that $ \star $ is a multiplicative Lie algebra structure on the group $G$.

\begin{definition} \cite{MRS} 
	Let $G$ be a multiplicative Lie algebra. The ideal $^L[G, G]$ generated by the set $\{(a\star b)^{-1}[a, b]\mid a, b \in G\}$ of $G$ is called  the Lie commutator of $G.$ The element $(a\star b)^{-1}[a, b]$ is denoted by $^L[a, b]$.
\end{definition}

\begin{proposition}\cite{MRS} \label{Lie}
	Let $G$ be a multiplicative Lie algebra.  Then following identities holds:
\end{proposition}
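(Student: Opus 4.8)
The plan is to prove each listed identity by a direct computation, unfolding the definition ${}^{L}[a,b]=(a\star b)^{-1}[a,b]$ and then reducing the resulting word in $G$ using only the five defining axioms of a multiplicative Lie algebra together with the two elementary commutator identities of the underlying group, namely $[a,bc]=[a,b]\cdot{}^{b}[a,c]$ and $[ab,c]={}^{a}[b,c]\cdot[a,c]$. These group identities have exactly the same shape as the $\star$-bilinearity axioms $(2)$ and $(3)$, and it is this parallelism that makes the operation ${}^{L}[\,,\,]$ manageable.

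For a ``product-rule'' identity such as the expansion of ${}^{L}[a,bc]$, I would first apply axiom $(2)$ to get $(a\star bc)^{-1}={}^{b}(a\star c)^{-1}\cdot(a\star b)^{-1}$, and simultaneously expand $[a,bc]=[a,b]\cdot{}^{b}[a,c]$. Multiplying and regrouping, I would aim to isolate the factor $(a\star b)^{-1}[a,b]={}^{L}[a,b]$ and, after rewriting ${}^{b}[a,c]={}^{b}(a\star c)\cdot{}^{b}({}^{L}[a,c])$, to recognise what remains as a conjugate of ${}^{L}[a,b]$ times ${}^{b}({}^{L}[a,c])$. The symmetric identity for ${}^{L}[ab,c]$ follows in the same way from axiom $(3)$ and the second group-commutator identity. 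The antisymmetry-type identity relating ${}^{L}[a,b]$ and ${}^{L}[b,a]$ rests on the preliminary observation that $b\star a=(a\star b)^{-1}$, which itself comes from expanding $(ab)\star(ab)=1$ by axioms $(2)$, $(3)$ and $x\star x=1$; combined with $[b,a]=[a,b]^{-1}$ this exhibits ${}^{L}[b,a]$ as a conjugate of ${}^{L}[a,b]^{-1}$. For the Jacobi-type identity I would begin from the cyclic relation $(4)$, match its three cyclic terms against the Hall--Witt identity for group commutators, and cancel in pairs, again using axiom $(5)$ to reconcile the conjugations.

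The only genuine difficulty throughout is the bookkeeping of conjugation factors: since $(G,\cdot)$ need not be abelian, both the $\star$-bilinearity and the group-commutator bilinearity produce conjugated terms, and to combine them one must transport these factors past one another in the correct order, which is exactly what introduces the conjugations appearing in the final identities. The key enabling fact is axiom $(5)$, ${}^{z}(x\star y)=({}^{z}x\star{}^{z}y)$, which guarantees that conjugation is an automorphism for \emph{both} operations simultaneously; this is precisely what lets the conjugation factors generated by the two expansions be identified with one another and absorbed, rather than obstructing the reduction. I expect this step---getting the conjugations correctly ordered and matched---to be the main obstacle, whereas each individual reduction is routine.
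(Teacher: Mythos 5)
First, a point of comparison: the paper itself gives no proof of this proposition --- it is quoted from \cite{MRS} --- so your proposal has to stand on its own. For identities (1), (3), (4), (5) and (6) your plan is sound and is the standard one: each is a purely formal consequence of unfolding ${}^L[a,b]=(a\star b)^{-1}[a,b]$, the $\star$-expansion axioms, the matching group-commutator expansions $[a,bc]=[a,b]\,{}^b[a,c]$ and $[ab,c]={}^a[b,c]\,[a,c]$, and axiom (5); the conjugation bookkeeping closes up exactly as you describe (for (6) the cleanest route is to derive $a^{-1}\star b={}^{a^{-1}}\bigl((a\star b)^{-1}\bigr)$ from $(aa^{-1})\star b=1$ and then factor ${}^{a^{-1}}$ out of the whole word).

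The gap concerns items (2) and (7). The list contains no Jacobi-type identity, so the step in which you ``match the cyclic relation (4) against Hall--Witt and cancel in pairs'' is aimed at a statement that is not being asserted, while the statement that \emph{is} asserted and that you leave unaddressed is identity (7): ${}^{{}^L[a,b]}(x\star y)=x\star y$, equivalently ${}^{[a,b]}(x\star y)={}^{(a\star b)}(x\star y)$. This is the one item that is not bilinearity bookkeeping; it is precisely where axiom (4) must genuinely enter (it is Ellis's observation that the defect $(a\star b)^{-1}[a,b]$ centralizes all $\star$-values), and your sketch contains no argument for it. Moreover, your treatment of (2) only yields ${}^L[b,a]=(a\star b)[a,b]^{-1}={}^{(a\star b)}\bigl({}^L[a,b]^{-1}\bigr)$, i.e.\ a conjugate of the inverse; to upgrade this to the asserted ${}^L[a,b]\,{}^L[b,a]=1$ you still need $[a,b]$ to commute with $a\star b$, which is exactly the case $x=a$, $y=b$ of identity (7). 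So (7) is not an optional extra to be handled ``in the same routine way'': it is the crux of the proposition, and identity (2) depends on it.
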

\begin{enumerate}
	\item  $^L[a, a] = 1, \ \forall \ a \in G,$ 
	
	\item  $^L[a, b] \ ^L[b, a] = 1, \ \forall \ a, b \in G,$ \vspace{0.12cm}
	
	\item $^L[ab, c] = {^L[a, c]} \ ^{(^c{a})} {(^L[b, c]),} \ \forall \ a, b, c \in G, $ 
	
	\item $ ^L[a, bc] = {^{b}{(^L[a, c])}} \  {^{[^{b}c, ^{b}a]}({^L[a, b]})}, \ \forall \ a, b, c \in G,$ 
	
	\item $ ^a{(^L[b, c])} = {^L[^ab, ^ac]}, \ \forall \ a, b, c \in G, $ \vspace{0.12cm}
	
	\item $^L[a^{-1}, b] = {^{a^{-1}}{(^L[b, a])}} \ \text{and} \ ^L[a, b^{-1}] = {^{b^{-1}}{(^L[b, a])}}, \ \forall \ a, b \in G,$ 
	
	\item $^{^L[a, b]}{(x \star y)} = {(x \star y)}. $ In particular, $[^L[a, b], {(x \star y)}] = 1,  \ \forall \ a, b, x, y \in G.$
\end{enumerate}

\begin{definition}\cite{MRS} Let $G$ be a multiplicative Lie algebra. Then
\end{definition}
 \begin{enumerate}
		\item $G$ is said to be Lie solvable if $G^{(n)} = \{1\}$
		for some $n$, where $G^{(0)} = G,$ and $G^{(k+1)} = {^L[G^{(k)}, G^{(k)}]}$ for all $k\in \mathbb{N}$.
		
		\item $G$ is said to be Lie nilpotent if $L_n(G) = {1}$
		for some $n$, where $G =L_0(G)$, and $L_{k+1}(G) =  {^L[G, L_k(G)]}$ for all $k\in \mathbb{N}$.
		
	\end{enumerate}

\subsection{Tensor product of groups}		
We say that groups $G$ and $H$ act  compatibly on each other if
\begin{equation} \label{C1}
^{(^gh)}g' = {^{ghg^{-1}}g'} ~\text{and}~   ^{(^hg)}h' = {^{hgh^{-1}}h'},
\end{equation}
where $^hg$ and $^gh$ denote the action of $h$ on $g$ and the action of $g$ on $h$, respectively.
\begin{definition}
	If $G$ and $H$ are groups which act compatibly on each other, then the  non-abelian tensor product $G \otimes H$ is the group generated by the symbols  $x \otimes y$ (for all $x \in G $ and $y \in H$) subject to the following relations:
\end{definition}
	\begin{enumerate}
		\item 	$x \otimes (yy') = (x \otimes y) (^yx \otimes ^yy')$, 
		\item   $(xx') \otimes y = (^xx' \otimes ^xy)(x \otimes y)$ .
	\end{enumerate}

\begin{proposition}\cite{RJ} \label{s2}
	Let $G$ and $H$ be groups acting on each other compatibly. Then $G$ and $H$ act on $G\otimes H$ by
	
	\hspace{2.5cm}	${^g(g' \otimes h )} = ({^g{g'}}\otimes {^gh}), \ \  ^h(g \otimes h' ) = (^h{g}\otimes {^h{h'}}),$
	
	for all $g,g' \in G \ \text{and} \ h,h' \in H. $
\end{proposition}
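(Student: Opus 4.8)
The plan is to show that the displayed formulas extend to genuine group actions. Fix $g \in G$ and define a map $\phi_g \colon G \otimes H \to G \otimes H$ on the generators of $G \otimes H$ by $\phi_g(g' \otimes h) = {}^{g}g' \otimes {}^{g}h$. First I would verify that $\phi_g$ is well-defined, that is, that it respects the two defining relations of $G \otimes H$; then that $g \mapsto \phi_g$ is a homomorphism from $G$ into $\mathrm{Aut}(G \otimes H)$, which is exactly the statement that $G$ acts on $G \otimes H$. The action of $H$ then follows by the symmetric argument, interchanging the roles of $G$ and $H$.

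The well-definedness is the heart of the matter, and it is where the compatibility hypothesis (\ref{C1}) is needed. To see that relation (1) is respected, I would apply $\phi_g$ to both sides of $x \otimes (yy') = (x \otimes y)({}^{y}x \otimes {}^{y}y')$. Since $G$ acts on $H$ by automorphisms, the image of the left-hand side is ${}^{g}x \otimes ({}^{g}y)({}^{g}y')$, and expanding this once more by relation (1) applied to the elements ${}^{g}x,\ {}^{g}y,\ {}^{g}y'$ yields
\[
({}^{g}x \otimes {}^{g}y)\bigl({}^{({}^{g}y)}({}^{g}x) \otimes {}^{({}^{g}y)}({}^{g}y')\bigr).
\]
On the other hand, the image of the right-hand side is $({}^{g}x \otimes {}^{g}y)\bigl({}^{g}({}^{y}x) \otimes {}^{g}({}^{y}y')\bigr)$. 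Comparing the two second factors reduces the whole verification to the identities ${}^{({}^{g}y)}({}^{g}y') = {}^{g}({}^{y}y')$ and ${}^{({}^{g}y)}({}^{g}x) = {}^{g}({}^{y}x)$. The former is immediate because the action of $G$ on $H$ is by automorphisms, while the latter is precisely compatibility (\ref{C1}), since ${}^{({}^{g}y)}({}^{g}x) = {}^{g}\bigl({}^{y}({}^{g^{-1}}({}^{g}x))\bigr) = {}^{g}({}^{y}x)$. Relation (2) is preserved by the mirror-image computation.

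Once $\phi_g$ is well-defined, checking that $g \mapsto \phi_g$ is an action is routine. On generators,
\[
\phi_{g_1 g_2}(g' \otimes h) = {}^{g_1 g_2}g' \otimes {}^{g_1 g_2}h = {}^{g_1}({}^{g_2}g') \otimes {}^{g_1}({}^{g_2}h) = \phi_{g_1}\bigl(\phi_{g_2}(g' \otimes h)\bigr),
\]
so the homomorphisms $\phi_{g_1 g_2}$ and $\phi_{g_1} \circ \phi_{g_2}$ agree on a generating set and therefore coincide; likewise $\phi_1 = \id$, so each $\phi_g$ is invertible with inverse $\phi_{g^{-1}}$ and hence an automorphism. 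I expect the only genuine obstacle to be keeping the nested left-superscript (action) notation straight in the well-definedness step and applying (\ref{C1}) in the correct direction; the underlying algebra is short once the defining relations are spelled out.
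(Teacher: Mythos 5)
Your argument is correct and complete; note that the paper states this proposition without proof (it is quoted from Brown--Loday \cite{RJ}), so there is no in-paper argument to compare against. Your well-definedness check on the two defining relations of $G\otimes H$, with the compatibility condition (\ref{C1}) entering exactly at the identity ${}^{({}^{g}y)}({}^{g}x) = {}^{g}({}^{y}x)$, is the standard proof, and the remaining verifications (the homomorphism property $\phi_{g_1g_2}=\phi_{g_1}\circ\phi_{g_2}$ and invertibility of each $\phi_g$) are routine as you say.
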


\begin{proposition}\cite{RJ} \label{s1}
	The following identities hold in $G\otimes H$ for all $g,g' \in G \ \text{and} \ h,h' \in H $
\end{proposition}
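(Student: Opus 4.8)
The plan is to derive each of these identities directly and formally from the two defining relations of $G\otimes H$, the action of $G$ and $H$ on $G\otimes H$ recorded in Proposition~\ref{s2}, and the compatibility conditions~\eqref{C1}; no topology is needed, only systematic manipulation inside the group $(G\otimes H,\cdot)$.

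First I would dispose of the degenerate generators. Putting $y=y'=1$ in the defining relation $x\otimes(yy')=(x\otimes y)({}^{y}x\otimes{}^{y}y')$ and using that the identity acts trivially gives $g\otimes 1=(g\otimes 1)(g\otimes 1)$, hence $g\otimes 1=1$; the second defining relation dually yields $1\otimes h=1$. These furnish the base cases on which the inverse formulas rest.

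Next I would read off the inverse identities. From $1=g\otimes(hh^{-1})=(g\otimes h)({}^{h}g\otimes{}^{h}h^{-1})$ together with Proposition~\ref{s2} I obtain $(g\otimes h)^{-1}={}^{h}(g\otimes h^{-1})$, and symmetrically $(g\otimes h)^{-1}={}^{g}(g^{-1}\otimes h)$ from the second defining relation. The ``bilinearity''-type identities (how $g\otimes h$ interacts with a product in either slot) are then just restatements of the two defining relations combined with these inverse formulas, so they cost only careful rewriting rather than any new idea.

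The main obstacle will be the conjugation identity describing how one generator acts on another, of the form ${}^{(g\otimes h)}(g'\otimes h')={}^{g\,{}^{h}g^{-1}}(g'\otimes h')$, i.e.\ conjugation by $g\otimes h$ coincides with the action of the image commutator. Here one cannot expand blindly: one must rewrite $(g\otimes h)(g'\otimes h')(g\otimes h)^{-1}$ using the defining relations and then collapse the resulting \emph{iterated} actions of $G$ on $H$ and of $H$ on $G$ into a single action. This collapse is precisely what the compatibility conditions~\eqref{C1} are designed to guarantee, and verifying that every nested action matches up at each junction — rather than the earlier bookkeeping — is the delicate step. Once~\eqref{C1} is invoked wherever a composite action appears, the identity follows and the proposition is established.
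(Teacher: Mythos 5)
The paper itself offers no proof of this proposition: it is quoted from Brown and Loday \cite{RJ} as a preliminary, so there is no internal argument to measure yours against, and I can only assess your outline on its own terms. The easy half is fine: $g\otimes 1=(g\otimes 1)^{2}$ does force $g\otimes 1=1$, and expanding $1=g\otimes(hh^{-1})$ with the action formulas of Proposition \ref{s2} gives $(g\otimes h)^{-1}={}^{h}(g\otimes h^{-1})$, and dually for the other inverse formula. Identities $(1)$ and $(2)$ are therefore correctly and completely handled.

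For identities $(3)$--$(6)$ there is a genuine gap. The defining relations of $G\otimes H$ only decompose a \emph{single} symbol $x\otimes y$ whose entries are products; they provide no rule for multiplying two distinct generators. So the proposed move --- ``rewrite $(g\otimes h)(g'\otimes h')(g\otimes h)^{-1}$ using the defining relations'' --- is not available: there is nothing in that word the relations can expand. Similarly, $(4)$ and $(5)$ are not ``restatements'' of the defining relations combined with the inverse formulas: applying the second defining relation to $(g\,{}^{h}g^{-1})\otimes h'$ yields $({}^{g}({}^{h}g^{-1})\otimes{}^{g}h')(g\otimes h')$, which is nowhere near the claimed $(g\otimes h)\,{}^{h'}(g\otimes h)^{-1}$ without a further multi-step computation using the action formulas, identity $(2)$, and the compatibility conditions. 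The route that actually works (and is the one in \cite{RJ}) is structural: first prove $(4)$ and $(5)$ by such computations, use them to show that $g\otimes h\mapsto g\,{}^{h}g^{-1}$ and $g\otimes h\mapsto{}^{g}h\,h^{-1}$ extend to homomorphisms $\lambda,\mu$ equipping $G\otimes H$ with crossed-module structures over $G$ and $H$, then obtain $(3)$ as the resulting Peiffer identity and $(6)$ by combining $(3)$ with $(4)$. Your plan skips this scaffolding entirely, so as written it would stall exactly at the step you yourself flag as the delicate one.
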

\begin{enumerate}
	\item $1_G \otimes h = 1_{G\otimes H} = g\otimes 1_H$ 
	
	\item $(g \otimes h)^{-1} = {^g(g^{-1} \otimes h)} = {^h(g \otimes h^{-1})}$ 
	\item $(g \otimes h)(g' \otimes h')(g \otimes h)^{-1} = {^{[g, h]}(g' \otimes h')} $ 
	
	\item $(g {^h{g^{-1}}}) \otimes h' = (g \otimes h)^{h'} (g \otimes h)^{-1}$ 
	\item $ g' \otimes ({^gh}h^{-1}) = {^{g'}(g \otimes h) (g \otimes h)^{-1}}$ 
	
	\item $[g \otimes h, g' \otimes h'] = (g^h{g^{-1}}) \otimes (^{g'}{h'}{h'}^{-1}). $
	
\end{enumerate}

\subsection{Tensor product of multiplicative Lie algebras}

\begin{definition} \cite{GNM}
	Let $G$ and $H$ be two multiplicative Lie algebras. By an action of $G$ on $H$ we mean an underlying group action of  $G$ on  $H$, given by a group homomorphism $\phi: G \to Aut(H)$ ( $H$ also acts on $G$), together with a map $G \times H \to H, (x, y) \to \langle x, y \rangle,$ satisfying the
	following conditions:
\end{definition}
\hspace{2.5cm}	$ \langle x, yy' \rangle  = \langle x, y\rangle \langle ^yx, {^yy'}\rangle, $

\hspace{2.5cm}   $\langle xx', y \rangle  = \langle ^xx', {^xy} \rangle \langle x, y\rangle,$ 

\hspace{2.5cm}	$\langle (x \star x'), {^{x'}y} \rangle \langle ^yx, \langle x', y\rangle \rangle ^{-1} \langle ^xx', {\langle x, y \rangle} ^{-1}\rangle^{-1} = 1, $

\hspace{2.5cm}	$\langle ^{y'}x, (y\star y')\rangle \langle  {\langle y, x\rangle}^{-1}, {^yy'}\rangle^{-1} \langle  {\langle y', x\rangle}, {^xy}\rangle ^{-1} = 1,$  

where $x, x' \in G, y, y'\in H, {^xy}= \phi(x)(y), {^xx'} = xx'x^{-1}, {^yy'} = yy'y^{-1}.$

\begin{definition} \cite{GNM} \label{s3}
	Let $G$ and $H$ be two multiplicative Lie algebras acting on each other. Then the non-abelian tensor product $G \otimes H$ is the multiplicative Lie algebra generated by the symbols $x \otimes y$ (for all $x \in G $ and $y \in H$) subject to the following relations:
\end{definition}
\begin{enumerate}
	\item 	$x \otimes (yy') = (x \otimes y) {^y(x \otimes y')}$ \label{C11} 
	
	\item   $(xx') \otimes y = {^x(x' \otimes y)} (x \otimes y)$ \label{C7}
	
	\item $((x \star x') \otimes {^{x'}y}) (^yx \otimes \langle x', y\rangle)^{-1} ({^xx'} \otimes {\langle x, y \rangle} ^{-1})^{-1} = 1$ \label{C8}
	
	\item $({^{y'}x}  \otimes (y\star y')) ( {\langle y, x\rangle}^{-1}  \otimes {^yy'})^{-1} ( {\langle y', x\rangle}  \otimes {^xy})^{-1} = 1$ \label{C9} 
	
	\item $ ((x \otimes y) \star (x' \otimes y')) = {\langle y, x \rangle}^{-1} \otimes {\langle x', y' \rangle}. $ \label{C10}
	
\end{enumerate}

for all $g, g' \in G \ \text{and} \ h, h' \in H.$

\begin{proposition} \cite{GLW} \label{p1} 
	Let $G$ and $H$ be two multiplicative Lie algebras acting on each other. If the actions of the underlying groups are compatible, then 	for all $x, g \in G \ \text{and} \ y, h \in H$\\
	
	\ \ \ 	$ \langle x, y \rangle \langle g, h \rangle {\langle x, y \rangle}^{-1} = \langle {^{[x, y]}g}, {^{[x, y]}h} \rangle,  \ \ \  \langle y, x \rangle \langle h, g \rangle {\langle y, x \rangle}^{-1} = \langle {^{[y, x]}h}, {^{[y, x]}g} \rangle, $ \vspace{0.2cm}\\
where $[x, y]$ and $[y, x]$ denote the $ ^x{y{y^{-1}}}$ and $^yx{{x^{-1}}},$ respectively. 

\end{proposition}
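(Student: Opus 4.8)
The plan is to exploit the evident symmetry of the hypotheses: the two asserted identities differ only by interchanging the roles of the mutually acting algebras $G$ and $H$ (and correspondingly $x\leftrightarrow y$, $g\leftrightarrow h$), so it suffices to establish the first one,
\[
\langle x,y\rangle\,\langle g,h\rangle\,\langle x,y\rangle^{-1}=\langle {}^{[x,y]}g,\,{}^{[x,y]}h\rangle ,\qquad [x,y]={}^{x}y\,y^{-1}\in H,
\]
and then read off the second after the relabelling $(G,H,x,y,g,h)\mapsto(H,G,y,x,h,g)$. I would first record the semantic content: both sides lie in $H$, the outer conjugation is taken in the group $(H,\cdot)$, and the pairing $\langle x,y\rangle\in H$ is the \emph{extra} datum of the action, a priori unrelated to the group-theoretic element $[x,y]={}^{x}y\,y^{-1}$. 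Thus the statement is precisely a bridge between the map $\langle\cdot,\cdot\rangle$ and the underlying group actions, which is exactly where the compatibility condition \eqref{C1} must enter.

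I would reduce the identity to two self-contained facts. Writing conjugation in $H$ as the self-action of $H$, the left-hand side is ${}^{\langle x,y\rangle}\langle g,h\rangle$, so it is enough to prove: \textbf{(I)} the pairing is equivariant under the conjugation action of $H$, i.e. ${}^{b}\langle g,h\rangle=\langle {}^{b}g,{}^{b}h\rangle$ for every $b\in H$; and \textbf{(II)} the element $\langle x,y\rangle$ and the group element $[x,y]={}^{x}y\,y^{-1}$ induce the same action on $G$ and on $H$. Granting both, one chains them as ${}^{\langle x,y\rangle}\langle g,h\rangle=\langle {}^{\langle x,y\rangle}g,{}^{\langle x,y\rangle}h\rangle=\langle {}^{[x,y]}g,{}^{[x,y]}h\rangle$, which is the claim.

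For \textbf{(I)} I would work from the first two action relations $\langle x,yy'\rangle=\langle x,y\rangle\langle {}^{y}x,{}^{y}y'\rangle$ and $\langle xx',y\rangle=\langle {}^{x}x',{}^{x}y\rangle\langle x,y\rangle$, combined with \eqref{C1}, to verify the naturality of the pairing under the two group actions; this is the routine, bookkeeping part. The decisive input is \textbf{(II)}, which is forced by the two remaining action axioms that tie $\langle\cdot,\cdot\rangle$ to the operation $\star$ and to nested pairings. Concretely, rearranging the third action relation gives
\[
\langle x\star x',{}^{x'}y\rangle=\langle {}^{x}x',\langle x,y\rangle^{-1}\rangle\,\langle {}^{y}x,\langle x',y\rangle\rangle ,
\]
together with its $H$-analogue. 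The nested brackets $\langle -,\langle\cdot,\cdot\rangle\rangle$ occurring here encode precisely conjugation by $\langle\cdot,\cdot\rangle$; expanding the inner slot with the first action relation (and using \eqref{C1} in the form ${}^{({}^{x}y)}(-)={}^{x}\big({}^{y}({}^{x^{-1}}(-))\big)$ to convert the composite $G$- and $H$-actions) exhibits the action of $\langle x,y\rangle$ and matches it against the composite action ${}^{[x,y]}={}^{x}{}^{y}{}^{x^{-1}}{}^{y^{-1}}$ obtained from $[x,y]={}^{x}y\,y^{-1}$.

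The main obstacle is exactly \textbf{(II)}: one must show that the independent datum $\langle x,y\rangle$ acts on both $G$ and $H$ in the same way as the group element $[x,y]$, and the difficulty is that the nested brackets produced by the third and fourth action axioms carry \emph{different} composite actions in their two slots, so the cancellations only appear after repeated appeals to the compatibility \eqref{C1} to align those composites. Once \textbf{(I)} and \textbf{(II)} are in place, the first identity follows as above, and the second is obtained verbatim by the symmetry substitution noted at the outset, since the hypotheses are symmetric in the two algebras.
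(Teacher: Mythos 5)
The paper gives no proof of Proposition \ref{p1}; it is quoted verbatim from \cite{GLW}, so there is nothing internal to compare against, and your argument has to stand on its own. It does not: the decisive step \textbf{(II)} is false. For your chain ${}^{\langle x,y\rangle}\langle g,h\rangle=\langle {}^{\langle x,y\rangle}g,{}^{\langle x,y\rangle}h\rangle=\langle {}^{[x,y]}g,{}^{[x,y]}h\rangle$ to apply to arbitrary $g,h$, you need $\langle x,y\rangle$ and $[x,y]={}^xy\,y^{-1}$ to induce the same action on \emph{all} of $G$ and \emph{all} of $H$, i.e.\ that $\langle x,y\rangle^{-1}[x,y]$ acts trivially. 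A counterexample: let $G=H$ be any nonabelian group with trivial $\star$, acting on itself by conjugation, with trivial pairing $\langle\cdot,\cdot\rangle\equiv 1$. All action axioms and all compatibility conditions of Definition \ref{1} hold, yet ${}^{\langle x,y\rangle}h'=h'$ while ${}^{[x,y]}h'=[x,y]h'[x,y]^{-1}\neq h'$ whenever the commutator $[x,y]=xyx^{-1}y^{-1}$ is non-central (take $G=S_3$); the proposition itself holds there trivially. The failure is also visible inside the paper: Lemma \ref{C6} gives $[\langle g,h\rangle,h']=({}^gh\,h^{-1})\star h'$, whereas conjugation by $[g,h]$ gives the group commutator $[[g,h],h']$, and the two differ by ${}^L[{}^gh\,h^{-1},h']$, which has no reason to vanish. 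What is actually true --- and what Propositions \ref{R1}, \ref{R2} and Remark \ref{d1} work to establish, under the \emph{strengthened} compatibility of Definition \ref{1} --- is only a controlled statement about how $\langle g,h\rangle^{-1}({}^gh\,h^{-1})$ acts, and agreement of the two actions on the ideals $\langle G,H\rangle$ and $\langle H,G\rangle$, not slotwise agreement on arbitrary elements.

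Step \textbf{(I)} is also not the ``routine bookkeeping'' you claim: ${}^{b}\langle g,h\rangle=\langle {}^{b}g,{}^{b}h\rangle$ is verbatim condition (4) of Definition \ref{1}, i.e.\ one of the \emph{postulated} compatibility conditions in this paper. The first two action axioms only control the pairing of products in each slot; they do not by themselves yield conjugation-equivariance under an arbitrary $b\in H$, and you give no derivation. The workable route is a Brown--Loday style computation that never separates the two slots: expand $\langle x,y\rangle\langle g,h\rangle\langle x,y\rangle^{-1}$ directly, using $\langle xx',y\rangle=\langle {}^xx',{}^xy\rangle\langle x,y\rangle$ and $\langle x,yy'\rangle=\langle x,y\rangle\langle {}^yx,{}^yy'\rangle$ to trade the outer factors $\langle x,y\rangle^{\pm1}$ for simultaneous twists of \emph{both} arguments of the inner pairing, and then invoke the group compatibility \eqref{C1} to recognise the accumulated composite twist ${}^{x}{}^{y}{}^{x^{-1}}{}^{y^{-1}}$ as the action of $[x,y]$. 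The identity holds only for the pair of slots jointly; splitting it into your (I) and (II) is exactly where the argument breaks.
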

	\section{On the nilpotency and solvability of tensor products}
In this section we establish interrelationship between the ideals ${^L}[H,G]$ and ${^L}[G,H]$ of a pair of multiplicative Lie algebras $G$ and $H$ and give upper bounds on the nilpotency class and solvability length of $ \frac{G \otimes H}{I},$ for some ideal $I$ of $G \otimes H,$ in terms of the nilpotency class and solvability length of $H.$ Now, we start the section by introducing few more compatible conditions of multiplicative Lie algebras $G$ and $H$ which are useful for further development.
\begin{definition}\label{1}
Let $G$ and $H$ be multiplicative Lie algebras acting on each other. The actions are said to be compatible if 
\end{definition}
\begin{enumerate}
\item $^{(^gh)}g' = {^{ghg^{-1}}g'} ~\text{and}~   ^{(^hg)}h' = {^{hgh^{-1}}h'}$;
	\item $\langle \langle h,g \rangle^{-1},h' \rangle = \langle g,h \rangle \star h'$ and  $\langle \langle g,h \rangle^{-1},g' \rangle = \langle h,g \rangle \star g'$;\label{C2} \vspace{0.12cm}
	
	\item $^{\langle g,h \rangle \langle h,g \rangle}h' = h'$ and $^{\langle g,h \rangle \langle h,g \rangle}g' = g' $;\label{C3} \vspace{0.12cm}
	
	\item $^g{\langle h,g' \rangle} = \langle {^gh},{^gg'} \rangle$ and $^h{\langle g,h' \rangle} = \langle {^hg},{^hh'} \rangle $;\label{C4} \vspace{0.12cm}
	
	\item $ \langle g {^h{g^{-1}}} ,h' \rangle = ({^gh}h^{-1}) \star h' $ and $ \langle h {^g{h^{-1}}} ,g' \rangle = ({^hg}g^{-1}) \star g' $,\label{C5}
\end{enumerate}
for all $g, g' \in G \ \text{and} \ h, h' \in H.	$ 

\begin{remark}
The compatible actions of  multiplicative Lie algebras are already defined in \cite{GLW}. Here, we have added two more conditions.	
\end{remark}
	
	\begin{lemma}\label{C6}
		Let $G$ and $H$ be multiplicative Lie algebras acting on each other compatibly. Then for all $g\in G$ and $h,h'\in H,$ \vspace{0.2cm}
		
	\hspace{2cm}	$	[\langle g,h \rangle,h'] =  \langle g {^h{g^{-1}}} ,h' \rangle = ({^gh}h^{-1}) \star h'.$
	\end{lemma}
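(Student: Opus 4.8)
The plan is to treat the two equalities separately. The rightmost equality $\langle g\,{}^h g^{-1},h'\rangle = ({}^g h\,h^{-1})\star h'$ is nothing but the first clause of the compatibility condition (5) in Definition \ref{1}, so it costs nothing once compatibility is assumed; the entire content of the lemma therefore lies in the first equality $[\langle g,h\rangle,h'] = \langle g\,{}^h g^{-1},h'\rangle$. I would begin by unwinding the group commutator on the left, $[\langle g,h\rangle,h'] = {}^{\langle g,h\rangle}h'\cdot h'^{-1} = \langle g,h\rangle\,h'\,\langle g,h\rangle^{-1}\,h'^{-1}$, so that the goal becomes the purely group-theoretic identity $\langle g,h\rangle\,h'\,\langle g,h\rangle^{-1}\,h'^{-1} = \langle g\,{}^h g^{-1},h'\rangle$ inside $H$.

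The key structural observation I would exploit is that the two defining conditions of the action $\langle\cdot,\cdot\rangle$ that involve only the group operations, namely $\langle x,yy'\rangle = \langle x,y\rangle\langle{}^y x,{}^y y'\rangle$ and $\langle xx',y\rangle = \langle{}^x x',{}^x y\rangle\langle x,y\rangle$, are formally identical to the defining relations of the non-abelian tensor product of the underlying groups. Hence the assignment $g\otimes h\mapsto\langle g,h\rangle$ extends to a group homomorphism $\theta$ from the group tensor product of the underlying groups of $G$ and $H$ into $H$, and compatibility condition (4) says precisely that $\theta$ is equivariant for the $H$-action, i.e. ${}^{h'}\langle g,h\rangle = \langle{}^{h'}g,{}^{h'}h\rangle$.

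I would then apply $\theta$ to the group identity of Proposition \ref{s1}(4), $(g\,{}^h g^{-1})\otimes h' = (g\otimes h)^{h'}(g\otimes h)^{-1}$, together with the $H$-action on the tensor product recorded in Proposition \ref{s2}. The left-hand side maps to $\langle g\,{}^h g^{-1},h'\rangle$, while the right-hand side maps, via equivariance, to a conjugate of $\langle g,h\rangle$ times $\langle g,h\rangle^{-1}$; matching this expression with $[\langle g,h\rangle,h']$ is the computation to be carried out. Compatibility condition (3), which forces $\langle g,h\rangle\langle h,g\rangle$ to act trivially and therefore lets one trade conjugation by $\langle g,h\rangle$ for conjugation by $\langle h,g\rangle^{-1}$, is the tool for fixing the orientation of the commutator. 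A self-contained alternative is to bypass $\theta$ and instead replay the group-theoretic derivation of Proposition \ref{s1}(4) using the two group-type action axioms directly, invoking compatibility (1) to collapse the nested action ${}^g({}^h g^{-1})$ and compatibility (4) to push the $H$-action through $\langle\cdot,\cdot\rangle$.

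I expect the main obstacle to be bookkeeping rather than conceptual. The defining relations produce mixed terms such as ${}^g h'$ and the doubly acted element ${}^g({}^h g^{-1})$, and these cancel only after compatibility conditions (1) and (3) are used in the right order; moreover the left/right conjugation conventions implicit in $[\,\cdot\,,\,\cdot\,]$ and in $(g\otimes h)^{h'}$ must be tracked carefully, since a careless choice makes the commutator emerge with the opposite orientation. Once the orientation is reconciled through condition (3), the first equality follows, and combining it with compatibility condition (5) yields the full chain of equalities asserted in the lemma.
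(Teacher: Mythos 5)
Your proposal is correct, but it takes a genuinely different route from the paper's. The paper proves the first equality by a long direct computation inside $H$: starting from $[\langle g,h \rangle,h'] = \langle g,h \rangle\,{}^{h'}\langle g,h \rangle^{-1}$, it repeatedly inserts trivial factors $\langle gg^{-1},\cdot\rangle$, applies the two group-type axioms of the action together with compatibility conditions (1) and (4), and collapses everything to $\langle g\,{}^{h}g^{-1},h'\rangle$ --- in effect re-deriving the Brown--Loday identity with $\langle\cdot,\cdot\rangle$ in place of $\otimes$; the second equality is then dispatched by compatibility condition (5), exactly as you do. Your main route instead packages that computation once and for all into the homomorphism $\theta\colon G\otimes H\to H$, $g\otimes h\mapsto\langle g,h\rangle$ (well defined because the two group-type action axioms are precisely the defining relations of the group tensor product, and $H$-equivariant by compatibility (4) combined with Proposition \ref{s2}), and transports Proposition \ref{s1}(4) along $\theta$. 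This is conceptually cleaner and makes the logical dependencies visible. The one caveat is that it hinges on reading Proposition \ref{s1}(4) correctly: its right-hand side must be parsed as $(g\otimes h)\cdot{}^{h'}\bigl((g\otimes h)^{-1}\bigr)$, i.e.\ the element times the $h'$-conjugate of its inverse. With that reading, applying $\theta$ gives $\langle g\,{}^{h}g^{-1},h'\rangle = \langle g,h\rangle\,{}^{h'}\langle g,h\rangle^{-1} = [\langle g,h\rangle,h']$ on the nose, so the orientation problem you anticipate does not actually arise and compatibility condition (3) is not needed anywhere. Your ``self-contained alternative'' of replaying the derivation of Proposition \ref{s1}(4) with the action axioms is essentially the paper's actual proof.
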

	\begin{proof} To prove our lemma, it suffices to show that 	$	[\langle g,h \rangle,h'] =  \langle g {^h{g^{-1}}} ,h' \rangle, $ for  $g\in G$ and $h,h'\in H.$ Now 
		\begin{align*}
			[\langle g,h \rangle,h'] = \langle g,h \rangle {(^{h'}\langle g,h \rangle^{-1})} 
			&= \langle g,h \rangle {({^{h'}{(^g\langle g^{-1}},h \rangle)})} \\
			&= \langle g,h \rangle \langle g,h' \rangle^{-1} \langle g,h' \rangle {({^{h'}{(^g\langle g^{-1}},h \rangle)})}  \langle g,h' \rangle^{-1} \langle g,h' \rangle \\
			&=  \langle g,h \rangle \langle g,h' \rangle^{-1} {({^{^g{h'}}{(^g\langle g^{-1}},h \rangle)})}  \langle g,h' \rangle \\
			&= \langle g,h \rangle \langle g,h' \rangle^{-1} {({{^g}{(^{h'} \langle g^{-1}},h \rangle)})}  \langle g,h' \rangle \\
			&=  \langle g,h \rangle \langle gg^{-1},h' \rangle \langle g,h' \rangle^{-1} {({{^g}{(^{h'} \langle g^{-1}},h \rangle)})}  \langle g,h' \rangle \\
			&=  \langle g,h \rangle {(^g{\langle g^{-1},h' \rangle})}  {({{^g}{(^{h'} \langle g^{-1}},h \rangle)})}  \langle g,h' \rangle \\
			&=  \langle g,h \rangle {(^g{\langle g^{-1},h'h \rangle})}  \langle g,h' \rangle \\
			&=  \langle g,h \rangle \langle gg^{-1},h \rangle^{-1}  {(^g{\langle g^{-1},h'h \rangle})}  \langle g,h' \rangle \\
			&=   {^g{\langle g^{-1},h \rangle^{-1}}}  {(^g{\langle g^{-1},h'h \rangle})}  \langle g,h' \rangle \\
			&=   {^g{\langle g^{-1},h \rangle^{-1}}}  {(^g{\langle g^{-1},hh^{-1}h'h \rangle})}  \langle g,h' \rangle \\
			&=   {^g{{(^h{\langle g^{-1},h^{-1}h'h \rangle})}}}  \langle g,h' \rangle \\
			&=   {^g{{\langle ^hg^{-1},h' \rangle}}}  \langle g,h' \rangle \\
			&=   \langle g {^h{g^{-1}}} ,h' \rangle. 
		\end{align*}
		Now, by compatibility condition, the result holds.
	\end{proof}
		
	\begin{proposition}\label{Ideal}
		Let $G$ and $H$ be multiplicative Lie algebras acting on each other compatibly. Then
			\end{proposition}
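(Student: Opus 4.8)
The plan is to prove that $^L[G,H]$ is an ideal of $H$ and, by the symmetric argument with the roles of $G$ and $H$ interchanged, that $^L[H,G]$ is an ideal of $G$; so I would concentrate on the first assertion. Recall that $^L[G,H]$ should be the subgroup of $H$ generated, as a normal and $\star$-closed subset, by the cross Lie-commutators ${}^L[g,h]=\langle g,h\rangle^{-1}\,({}^gh)\,h^{-1}$ for $g\in G$, $h\in H$ (here ${}^gh\,h^{-1}=[g,h]\in H$ in the notation of Proposition \ref{p1}). To show that the normal closure of these elements is an ideal it is enough, by the crossed-homomorphism axioms (2) and (3) for $\star$, to check two things on the generators: stability under the conjugation action of $H$ (and of $G$), and stability under $\star$-multiplication by an arbitrary element of $H$.

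First I would dispose of the action stability, which I expect to be routine. Applying an element $h''\in H$ by conjugation to ${}^L[g,h]$ and using compatibility condition (4), namely ${}^{h}\langle g,h'\rangle=\langle {}^hg,{}^hh'\rangle$, together with the identity ${}^a({}^L[b,c])={}^L[{}^ab,{}^ac]$ from Proposition \ref{Lie}(5), rewrites ${}^{h''}({}^L[g,h])$ as a cross Lie-commutator ${}^L[g_1,h_1]$ for suitable $g_1\in G$, $h_1\in H$; the stability under the $G$-action is identical using the other half of condition (4). These are pure substitutions and introduce no genuine difficulty.

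The substantive step is $\star$-closure: I must show $h''\star{}^L[g,h]\in{}^L[G,H]$ for every $h''\in H$. Expanding $h''\star(\langle g,h\rangle^{-1}\,({}^gh)\,h^{-1})$ by axiom (2) for $\star$ and Proposition \ref{Lie} breaks the product into $\star$-factors of the shape $h''\star[g,h]$ and $h''\star\langle g,h\rangle^{\pm1}$. Here Lemma \ref{C6} is the decisive tool: it identifies $[g,h]\star h'$ with the single cross term $\langle g\,{}^hg^{-1},h'\rangle$, so, using the anti-symmetry $x\star y=(y\star x)^{-1}$ of $\star$ in $H$, each commutator-$\star$-factor collapses to a $\langle\cdot,\cdot\rangle$-term, while compatibility conditions (2) and (5) convert the remaining $\langle\cdot,\cdot\rangle\star h'$ and $(g\,{}^hg^{-1})\star$-factors into the same form. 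Assembling these replacements should express $h''\star{}^L[g,h]$ as a product of conjugates of cross Lie-commutators, hence as an element of $^L[G,H]$; the right-handed version ${}^L[g,h]\star h''$ follows symmetrically from axiom (3).

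I expect the main obstacle to be bookkeeping rather than conceptual. Each use of an axiom or of a compatibility condition shuttles elements between $G$ and $H$ and spawns new action- and $\langle\cdot,\cdot\rangle$-terms, so the delicate point is to keep track, at every step, of which group each factor inhabits and to confirm that each intermediate factor really does rewrite as a cross Lie-commutator or its inverse. The one place where a true identity rather than a definitional rewrite is needed is the Jacobi-type axiom (4) (equivalently Proposition \ref{Lie}(4)); matching its indices to the mixed $G$--$H$ setting is where I would be most careful.
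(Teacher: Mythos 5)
Your plan for the third assertion follows the paper's own route: show normality of ${}^L[G,H]$ on generators via compatibility condition (4), then establish $\star$-closure by expanding $h'\star\bigl(\langle g,h\rangle^{-1}({}^gh\,h^{-1})\bigr)$ with the $\star$-axioms and using Lemma \ref{C6} together with compatibility conditions (2), (3) and (5). Two remarks. First, the proposition asserts three things --- that ${}^GH$, $\langle G,H\rangle$ and ${}^L[G,H]$ are each ideals of $H$ --- and your proposal addresses only the last; the first two are easier (for ${}^GH$ the $\star$-closure is again immediate from Lemma \ref{C6}, since $h'\star({}^gh\,h^{-1})=[\langle g,h\rangle,h']^{-1}=({}^{\langle h,g\rangle^{-1}}h'\,h'^{-1})^{-1}$), but they are part of the statement and should be recorded. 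Second, the step you defer as ``bookkeeping'' is actually the crux: after the two $\star$-factors are converted, what survives is $\langle\langle h,g\rangle,h'\rangle^{-1}\cdot({}^{\langle h,g\rangle}h'\,h'^{-1})$, and neither factor separately lies in ${}^L[G,H]$ --- a generic ``product of $\langle\cdot,\cdot\rangle$-terms'' would only land in $\langle G,H\rangle$. The argument closes only because this particular product is \emph{exactly} the single generator ${}^L[\langle h,g\rangle,h']$ with $\langle h,g\rangle\in G$, which is how the paper concludes; you should make that identification explicit rather than appeal to a product of conjugates of generators.
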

		\begin{enumerate}
			\item The subgroup ${^GH}$ generated by the set $\{{^gh}h^{-1}:g\in G, h\in H\}$ is an ideal of $H.$
			
			\item The subgroup $ \langle G,H \rangle $ generated by the set $\{\langle g,h \rangle:g\in G, h\in H\}$ is an ideal of $H.$
			
			\item The subgroup $^L[G,H]$ generated by the set $\{\langle g,h \rangle^{-1}({^gh}h^{-1}):g\in G, h\in H\}$ is an ideal of $H$ and element $\langle g,h \rangle^{-1}({^gh}h^{-1})$ of $^L[G,H]$ is denoted by  $^L[g,h].$  
	\end{enumerate} 

	\begin{proof}$(1)$ Let $h' \in H$ and ${^gh}h^{-1} \in {^GH}. $ Then by group compatible condition, it is easy to see that ${^GH}$ is a normal subgroup of $H.$
			 
	Now by using Lemma \ref{C6}, and taking into account the compatibility conditions of actions, we have $	h' \star ({^gh}h^{-1}) = (({^gh}h^{-1}) \star h')^{-1} = [\langle g,h\rangle, h']^{-1} = {(^{\langle g,h\rangle}h'h'^{-1})^{-1}} = {(^{\langle h,g\rangle^{-1}}h'h'^{-1})^{-1}} \in {^GH}.$ 	Hence, ${^GH}$  is an ideal of $H.$ \vspace{0.15cm}\\
$(2)$ Let $h' \in H$ and $\langle g,h \rangle \in \langle G,H \rangle. $ Then by compatible conditions, it can be easily seen that $ \langle G,H \rangle $ is an ideal of $H.$ \vspace{0.15cm}\\
$(3)$ Suppose  $ g\in G \ \text{and} \ h, h' \in H,$ then by the compatibility conditions, we have  $ {^{h'}(\langle g,h \rangle^{-1}({^gh}h^{-1}))}  = {{^{h'}\langle g,h \rangle^{-1}}{({^{h'}({^gh}h^{-1})})}} = {\langle ^{h'}g,{^{h'}h} \rangle^{-1}}{(^{h'g}{(^{h'}h)}{^{h'}h^{-1}})}. $ Hence, any generator of $^L[G,H]$ is mapped to another generator under the action of conjugation by elements of $H.$ It follows $^L[G,H]$ is a normal subgroup of $H.$ Also
			\begin{align*}
			\hspace{-2.5cm}	h' \star (\langle g,h \rangle^{-1}({^gh}h^{-1})) 
				&= (h'\star \langle g,h \rangle^{-1}){(^{\langle g,h \rangle^{-1}}(h' \star ({^gh}h^{-1}) ))} \\ 
				&= {( \langle g,h \rangle^{-1} \star h')}^{-1}{({^{\langle g,h \rangle^{-1}}(h' \star ({^gh}h^{-1}))} )}\\
				&= {\langle  \langle h,g \rangle , h' \rangle}^{-1}{({^{\langle g,h \rangle^{-1}} {({^{h'}{\langle g,h \rangle}}{\langle g,h \rangle^{-1}}})})} \\
				&=  {\langle  \langle h,g \rangle , h' \rangle}^{-1} {({^{\langle h,g \rangle}}h'h'^{-1})}.
			\end{align*}
			Thus $^L[G,H]$  is an ideal of $H.$\\
	Similarly, we can also prove that ${^HG},$ $ \langle H, G \rangle $ and $^L[H,G]$ is an ideal of G. 
	\end{proof}

	\begin{proposition}\label{R1}
		Let $G$ and $H$ be multiplicative Lie algebras acting on each other in a  compatible way. Then
		\begin{align*}
			^{\langle g,h \rangle^{-1}({^gh}h^{-1})}g' = {^{\langle h,g \rangle {(g{^hg^{-1}})}}g'}  \ \ \text{and} \ \
			^{\langle g,h \rangle^{-1}({^gh}h^{-1})}h' = {^{\langle h,g \rangle {(g{^hg^{-1}})}}h'}
		\end{align*} 
		for all $g,g'\in G$ and $h,h'\in H.$
	\end{proposition}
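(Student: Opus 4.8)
The plan is to split each of the two conjugating elements into a product of two factors and to match the factors one at a time. Write the $H$-element as $\langle g,h \rangle^{-1}({}^{g}h\,h^{-1}) = u_1u_2$ with $u_1 = \langle g,h \rangle^{-1}$ and $u_2 = {}^{g}h\,h^{-1}$, and write the $G$-element as $\langle h,g \rangle(g\,{}^{h}g^{-1}) = v_1v_2$ with $v_1 = \langle h,g \rangle$ and $v_2 = g\,{}^{h}g^{-1}$. Because the mutual actions are genuine group actions, the action of a product is the composite of the actions of its factors. Hence it suffices to prove that $u_1$ and $v_1$ induce the same transformation on $G$ and on $H$, and that $u_2$ and $v_2$ do as well: granting this, for every $g' \in G$ one gets ${}^{u_1u_2}g' = {}^{u_1}({}^{u_2}g') = {}^{u_1}({}^{v_2}g') = {}^{v_1}({}^{v_2}g') = {}^{v_1v_2}g'$, and identically for $h' \in H$, which are exactly the two asserted equalities.

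For the first pair I would read the conclusion straight off the third compatibility condition of Definition \ref{1}. The identity ${}^{\langle g,h \rangle \langle h,g \rangle}g' = g'$ says that conjugating $g'$ by $\langle h,g \rangle$ inside $G$ and then applying the action of $\langle g,h \rangle$ returns $g'$; thus the action of $\langle g,h \rangle$ on $G$ is inverse to that of $\langle h,g \rangle$, and therefore the action of $u_1 = \langle g,h \rangle^{-1}$ coincides with that of $v_1 = \langle h,g \rangle$. The companion identity ${}^{\langle g,h \rangle \langle h,g \rangle}h' = h'$ gives the same statement on $H$.

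For the second pair I would use the first compatibility condition of Definition \ref{1}, namely ${}^{({}^gh)}g' = {}^{ghg^{-1}}g'$, together with the fact that the actions are by automorphisms. The action of $u_2 = {}^{g}h\,h^{-1}$ on $g'$ is ${}^{({}^gh)}({}^{h^{-1}}g')$, and feeding the element ${}^{h^{-1}}g'$ into the compatibility condition rewrites it as ${}^{g}({}^{h}({}^{g^{-1}}({}^{h^{-1}}g')))$. Since $h$ acts on $G$ by a group automorphism and ${}^{h}({}^{h^{-1}}g') = g'$, the inner block collapses, ${}^{h}({}^{g^{-1}}({}^{h^{-1}}g')) = ({}^hg^{-1})\,g'\,({}^hg) = {}^{({}^hg^{-1})}g'$, so that the whole expression becomes ${}^{g}({}^{({}^hg^{-1})}g') = {}^{(g\,{}^hg^{-1})}g' = {}^{v_2}g'$, the conjugation by $v_2$. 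Running the same computation with the $H$-counterpart ${}^{({}^hg)}h' = {}^{hgh^{-1}}h'$ of the compatibility condition and the automorphy of the action of $g$ on $H$ shows that $u_2$ and $v_2$ act alike on $H$.

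Combining the two factor comparisons through the composition remark of the first paragraph yields both displayed equalities. The only delicate point---and the likeliest source of error---is the bookkeeping: at each step one must keep straight whether a superscript denotes an internal conjugation in $G$, an internal conjugation in $H$, or a cross-action, and one must feed the compatibility conditions the correct element with the correct inverses. Once the automorphy of the actions is used to absorb the internal conjugations, no further structural input (in particular, none of the multiplicative Lie axioms) is needed.
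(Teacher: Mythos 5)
Your proof is correct and matches the paper's argument: the paper's one-line chain $^{\langle g,h \rangle^{-1}({^gh}h^{-1})}g' = {^{\langle g,h \rangle^{-1}}({^{{^gh}h^{-1}}g'})} = {^{\langle g,h \rangle^{-1}}({^{g{^hg^{-1}}}g'})} = {^{\langle h,g \rangle (g{^hg^{-1}})}g'}$ is exactly your factor-by-factor comparison, using the group compatibility condition for the ${^gh}h^{-1}$ versus $g\,{^hg^{-1}}$ piece and condition (3) of Definition \ref{1} for the $\langle g,h\rangle^{-1}$ versus $\langle h,g\rangle$ piece. No substantive difference.
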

	
	\begin{proof}
		Suppose	$g,g'\in G$ and $h\in H$, then by the compatibility conditions, we have
		\begin{align*}
			^{\langle g,h \rangle^{-1}({^gh}h^{-1})}g' = {^{\langle g,h \rangle^{-1}}({^{{^gh}h^{-1}}g'})} 
			= {^{\langle g,h \rangle^{-1}}(^{g{^hg^{-1}}}g')} 
			= {^{\langle h,g \rangle {(g{^hg^{-1}})}}g'}.
		\end{align*} 
		Similarly, the second equality holds.
	\end{proof}
	
	Now we extend the above proposition to all of $^L[H,G] $ and $^L[G,H] $.
\begin{proposition}\label{R2}
		Let $G$ and $H$ be multiplicative Lie algebras acting on each other in a  compatible way. Given $x\in {^L[G,H]} $, there exists $y\in {^L[H,G]} $ such that for all $g'\in G $ and $h'\in H$, 
		\begin{equation} \label{p}
			^xg' = {^yg'}  \ \ \text{and} \ \ ^xh' = {^yh'} .
		\end{equation}
	\end{proposition}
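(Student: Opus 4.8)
The plan is to bootstrap from the generator-level identity of Proposition \ref{R1} to all of ${^L[G,H]}$ by exploiting that the assignment ``$x$ acts in the same way as $y$'' is compatible with the group operations.

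First, I would record the correspondence on generators and check that it lands in the right place. For a generator ${^L[g,h]}=\langle g,h\rangle^{-1}({^gh}h^{-1})$ and the element $y_0:=\langle h,g\rangle\,(g\,{^hg^{-1}})$, Proposition \ref{R1} already gives ${}^{{^L[g,h]}}g'={^{y_0}g'}$ and ${}^{{^L[g,h]}}h'={^{y_0}h'}$ for all $g'\in G$, $h'\in H$. The point that needs checking is that $y_0\in{^L[H,G]}$: rewriting gives $y_0={}^{\langle h,g\rangle}\big(({^L[h,g]})^{-1}\big)$, and since $\langle h,g\rangle\in G$ and ${^L[H,G]}$ is an ideal of $G$ by Proposition \ref{Ideal}, indeed $y_0\in{^L[H,G]}$.

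Next I would isolate the structural feature that lets me pass from generators to products. Call a pair $(x,y)\in H\times G$ \emph{action-equivalent} if ${^xg'}={^yg'}$ and ${^xh'}={^yh'}$ for all $g'\in G$, $h'\in H$. Because $H$ acts on $G$, and $G$ on $H$, through group homomorphisms into $Aut(G)$ and $Aut(H)$, and the conjugation actions of $H$ on itself and of $G$ on itself are homomorphic as well, action-equivalence is preserved under multiplication and inversion. For instance, if $(x_1,y_1)$ and $(x_2,y_2)$ are action-equivalent, then
\[
{}^{x_1x_2}g' = {}^{x_1}({}^{x_2}g') = {}^{x_1}({}^{y_2}g') = {}^{y_1}({}^{y_2}g') = {}^{y_1y_2}g',
\]
where the middle two equalities use equivalence of $(x_2,y_2)$ on $g'$ and then of $(x_1,y_1)$ on the element ${}^{y_2}g'\in G$; the computation on $H$ and the one for inverses are identical. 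Thus the action-equivalent pairs form a subgroup of $H\times G$.

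Finally I would assemble the two pieces. By the proof of Proposition \ref{Ideal}, conjugation by an element of $H$ sends each generator of ${^L[G,H]}$ to another generator, so every $x\in{^L[G,H]}$ is a finite product $x=\prod_i({^L[g_i,h_i]})^{\epsilon_i}$ with $\epsilon_i\in\{\pm1\}$. Letting $y_{0,i}$ be the partner of ${^L[g_i,h_i]}$ from the first step and setting $y:=\prod_i y_{0,i}^{\,\epsilon_i}$, the subgroup property gives that $(x,y)$ is action-equivalent, which is exactly \eqref{p}, while $y\in{^L[H,G]}$ since it is a product of elements of the subgroup ${^L[H,G]}$. The main obstacle is the substitution in the displayed chain: replacing $x_i$ by $y_i$ in the interior of a composite action is legitimate only because the identity of Proposition \ref{R1} holds for \emph{all} $g'$ and $h'$, hence applies to the intermediate image ${}^{y_2}g'$ rather than to a distinguished subset; granting this, the homomorphism property of the actions closes the argument and the remaining reductions are routine.
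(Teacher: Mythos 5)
Your proof is correct and takes essentially the same route as the paper's: establish the correspondence on generators via Proposition \ref{R1}, then extend to all of ${}^L[G,H]$ using that the four relevant actions are by group automorphisms --- the paper packages this as an induction on word length, while you package it as closure of the set of action-equivalent pairs under products and inverses, which is the same computation. Your explicit verification that $\langle h,g\rangle\,(g\,{}^hg^{-1})={}^{\langle h,g\rangle}\bigl(({}^L[h,g])^{-1}\bigr)$ lies in ${}^L[H,G]$ (via normality from Proposition \ref{Ideal}) is a detail the paper asserts without proof, and is worth recording.
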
	

	\begin{proof}
	Let $x\in ^L[G,H].$ By inducting on the word length $n$ of $x$ in terms of the generators
		of $^L[G,H] $ and their inverses, we will show there exists $y\in ^L[H,G] $ such that the pair $x, y$ satisfies (\ref{p}), that is $^xg' = {^yg'}$ and $ ^xh' = {^yh'}$ for all $g'\in G $ and $h'\in H.$ First suppose $n = 1.$ Then $x$ is either a generator, in which case the claim reduces to Proposition \ref{R1}, or $x = {({\langle g,h \rangle^{-1}({^gh}h^{-1})})}^{-1}$ for some $g\in G$ and $h\in H.$ In the latter case, setting $y = ({\langle h,g \rangle}{(g{^hg^{-1}})})^{-1} $, it follows from Proposition (\ref{p}) that  $^{y^{-1}}(^xg') = {^{x^{-1}}({^xg'})} = g' $ for $g'\in G.$ Thus, we have $^xg' = {^yg'} $ for all $g'\in G.$ Similarly, $ ^xh' = {^yh'} $ for all  $h'\in H.$ 
		
		Now suppose $n \geq 2.$ Then  $x = {({\langle g,h \rangle^{-1}({^gh}h^{-1})})}^{\epsilon}x'$, where $\epsilon = \pm 1$ and $x'$ has word length $n-1.$ By the induction hypothesis, there exists $y' \in {^L[H,G]}$ such that $^{x'}g' = {^{y'}g'}.$ Setting $y =  {({{\langle h,g \rangle {(g{^hg^{-1}})}})}^{\epsilon}y' \in {^L[H,G]}}  $, we get $^xg' = {^yg'} .$ Similarly, $ ^xh' = {^yh'} $ for all  $h'\in H.$
	\end{proof}
	
	\begin{remark} \label{d1}
	\begin{enumerate}
	\item If $G$ and $H$ be two multiplicative Lie algebras acting on each other compatibly, then by Proposition \ref{p1} and compatibility condition \ref{C4}, it becomes evident that the elements $ ^L[x, y]$
	and $ ^L[y, x],$ $(x \in G, y\in H)$ commute with the elements of $ \langle G, H \rangle$ and $ \langle H, G \rangle,$ respectively.
	\item By Proposition \ref{R2}, it is easy to see that the ideals $^L[G,H]$ and $^L[H,G]$ act trivially on the ideals $ \langle H,G \rangle $  and $ \langle G,H \rangle $, respectively.
	\end{enumerate}
	 
	\end{remark}
		
	Note that the previous proposition continues to hold if ${^L[G,H]} $ and ${^L[H,G]} $ are interchanged, that is given $y\in {^L[H,G]} $, there exists $x\in {^L[G,H]} $ such that the pair $x, y$ satisfies (3.1). Next we give a corollary of Proposition \ref{R2} which will be of use later. The
	proof is straightforward and thus is omitted.
	\begin{corollary}\label{R3}
		Let  $x_i\in {^L[G,H]} $, and $y_i\in {^L[H,G]}, i = 1,2, $ such that each pair $x_i, y_i$ satisfies $(3.1)$. Then the pairs ${x_i}^{-1}, {y_i}^{-1}$ and $[x_1,x_2], [y_1,y_2] $ also satisfy $(3.1)$.  
	\end{corollary}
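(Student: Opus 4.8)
The plan is to use that the relevant actions are honest underlying group actions: the action of $H$ on $G$ and on $H$, and the action of $G$ on $G$ and on $H$, all satisfy ${}^{ab}c = {}^{a}({}^{b}c)$ and ${}^{1}c = c$, so they are compatible with products and inverses of the acting group. Both assertions of the corollary only involve forming inverses and commutators of the $x_i$ inside ${}^L[G,H]$ and of the $y_i$ inside ${}^L[H,G]$; since these are ideals, hence subgroups closed under inverses and commutators, the elements $x_i^{-1}$ and $[x_1,x_2]$ again lie in ${}^L[G,H]$ while $y_i^{-1}$ and $[y_1,y_2]$ again lie in ${}^L[H,G]$, so the statement is well posed. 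It then remains to check that property $(\ref{p})$ is transported along these two operations, which is purely formal.

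First I would dispose of the inverses. Fix a pair $x, y$ satisfying $(\ref{p})$, so that the self-maps $g' \mapsto {}^{x}g'$ and $g' \mapsto {}^{y}g'$ of $G$ coincide. Because these are group actions, each map is a bijection whose inverse is the action of the inverse group element, and equal maps have equal inverses, so ${}^{x^{-1}}g' = {}^{y^{-1}}g'$ for all $g' \in G$. Explicitly, substituting ${}^{x^{-1}}g''$ for $g'$ in ${}^{x}g' = {}^{y}g'$ gives $g'' = {}^{y}({}^{x^{-1}}g'')$, and applying ${}^{y^{-1}}$ to both sides yields ${}^{y^{-1}}g'' = {}^{x^{-1}}g''$. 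The identical computation with $h'$ in place of $g'$ shows ${}^{x^{-1}}h' = {}^{y^{-1}}h'$, so the pair $x^{-1}, y^{-1}$ satisfies $(\ref{p})$.

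For the commutators I would expand $[x_1,x_2] = x_1 x_2 x_1^{-1} x_2^{-1}$ and propagate the equalities from the inside out. Using that the action of $H$ on $G$ is a group action,
\begin{equation*}
{}^{[x_1,x_2]}g' = {}^{x_1 x_2 x_1^{-1} x_2^{-1}}g' = {}^{x_1}({}^{x_2}({}^{x_1^{-1}}({}^{x_2^{-1}}g'))).
\end{equation*}
I then apply, starting from the innermost layer, the equalities ${}^{x_2^{-1}}(\,\cdot\,) = {}^{y_2^{-1}}(\,\cdot\,)$ and ${}^{x_1^{-1}}(\,\cdot\,) = {}^{y_1^{-1}}(\,\cdot\,)$ furnished by the first assertion, followed by ${}^{x_2}(\,\cdot\,) = {}^{y_2}(\,\cdot\,)$ and ${}^{x_1}(\,\cdot\,) = {}^{y_1}(\,\cdot\,)$ furnished by the hypothesis; each replacement is legitimate because the element of $G$ at which it is applied is arbitrary. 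This turns the right-hand side into ${}^{y_1}({}^{y_2}({}^{y_1^{-1}}({}^{y_2^{-1}}g'))) = {}^{[y_1,y_2]}g'$. Running the same chain of four substitutions with $h'$ in place of $g'$ gives ${}^{[x_1,x_2]}h' = {}^{[y_1,y_2]}h'$, so $[x_1,x_2], [y_1,y_2]$ satisfy $(\ref{p})$.

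No step here presents a genuine obstacle, which is exactly why the authors omit the proof. The whole content is that property $(\ref{p})$ is stable under the group operations, and this is immediate once the actions are recognized as genuine group actions. The only point demanding a little care is the order of the four substitutions in the commutator computation: they must be performed innermost first, each being an identity valid for the arbitrary element of $G$ (or $H$) sitting at that layer. This is pure bookkeeping and introduces no new identities.
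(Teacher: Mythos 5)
Your proof is correct and is precisely the "straightforward" argument the paper alludes to when it omits the proof: since $x\mapsto{}^{x}(\cdot)$ and $y\mapsto{}^{y}(\cdot)$ are genuine group actions (by automorphisms) on both $G$ and $H$, the hypothesis says the corresponding automorphisms coincide, and equality of automorphisms is preserved under taking inverses and forming commutators. Nothing is missing.
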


	\begin{lemma}\label{R4}
		Let $G$ and $H$ be multiplicative Lie algebras acting on each other compatibly.
		Suppose $x_1, x_2\in {^L[G,H]} $, then there exists $y_1\in {^L[H,G]} $ such that  $x_1 \star x_2 = \langle y_1,x_2 \rangle.$
	\end{lemma}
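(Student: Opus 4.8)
The plan is to argue by induction on the word length $n$ of $x_1$ in the generators ${^L[g,h]}=\langle g,h\rangle^{-1}({^gh}h^{-1})$ of ${^L[G,H]}$ and their inverses. The engine throughout is the third defining identity of a multiplicative Lie algebra, $(uv)\star w={^u(v\star w)}\,(u\star w)$, which lets me split a product in the first slot of $\star$, together with Lemma \ref{C6} and compatibility condition \ref{C2}, which turn the resulting pieces into brackets $\langle-,x_2\rangle$.

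For the base case $n=1$ I would write a generator as $x_1=c^{-1}b$ with $c=\langle g,h\rangle$ and $b={^gh}h^{-1}$, and expand $x_1\star x_2={^{c^{-1}}(b\star x_2)}\,(c^{-1}\star x_2)$. Lemma \ref{C6} rewrites $b\star x_2$ as $\langle g\,{^hg^{-1}},x_2\rangle$, while compatibility condition \ref{C2} gives $c\star x_2=\langle\langle h,g\rangle^{-1},x_2\rangle$ and hence, via $(c^{-1}c)\star x_2=1$, identifies $c^{-1}\star x_2$ as a bracket as well. The outer conjugations by $c^{-1}$ are moved inside the brackets by compatibility condition \ref{C4}, and they fix the second entry $x_2$ because Remark \ref{d1} says $x_2\in{^L[G,H]}$ commutes with $\langle G,H\rangle\ni c$. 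Rewriting the $\langle h,g\rangle$-action as the $\langle g,h\rangle^{-1}$-action through compatibility condition \ref{C3}, the same remark shows $\langle h,g\rangle$ also fixes $x_2$, so both factors become brackets with the \emph{identical} second entry $x_2$; the first-slot rule $\langle ab,x_2\rangle=\langle{^ab},{^ax_2}\rangle\langle a,x_2\rangle$ then merges them into $\langle({^L[h,g]})^{-1},x_2\rangle$, and $({^L[h,g]})^{-1}\in{^L[H,G]}$ by Proposition \ref{Ideal}. The inverse-generator case is symmetric.

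For the induction step ($n\ge2$) I would peel off one generator, $x_1=s^{\epsilon}x_1'$ with $s={^L[g,h]}$, $\epsilon=\pm1$, and expand $x_1\star x_2={^{s^{\epsilon}}(x_1'\star x_2)}\,(s^{\epsilon}\star x_2)$. The induction hypothesis gives $x_1'\star x_2=\langle y',x_2\rangle$ and the base case gives $s^{\epsilon}\star x_2=\langle y'',x_2\rangle$ with $y',y''\in{^L[H,G]}$; both lie in $\langle G,H\rangle$. Here the outer conjugation costs nothing: since $s^{\epsilon}\in{^L[G,H]}$, Remark \ref{d1} makes it commute with every element of $\langle G,H\rangle$, so ${^{s^{\epsilon}}\langle y',x_2\rangle}=\langle y',x_2\rangle$. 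What remains is to merge $\langle y',x_2\rangle\langle y'',x_2\rangle$ into a single bracket $\langle y'y'',x_2\rangle$, with $y'y''\in{^L[H,G]}$.

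The hard part will be precisely this last merge. The first-slot rule collapses $\langle y',x_2\rangle\langle y'',x_2\rangle$ to $\langle y'y'',x_2\rangle$ \emph{only} if the right-hand element $y''\in{^L[H,G]}$ fixes $x_2$; otherwise the rule replaces the second entry by ${^{y''}x_2}$ and the two brackets cease to share a second argument. I therefore expect the key auxiliary fact to be that the generators of ${^L[H,G]}$ act trivially on $x_2\in{^L[G,H]}$. The route I would take is to transport the ${^L[H,G]}$-action to an ${^L[G,H]}$-action by Propositions \ref{R1} and \ref{R2}, then use compatibility conditions \ref{C3} and \ref{C4} to reduce the conjugating element to one in $\langle G,H\rangle$, and finally apply the commuting relation of Remark \ref{d1}; verifying that this reduction really returns $x_2$ unchanged is the step I anticipate needing the most careful bookkeeping.
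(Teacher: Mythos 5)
Your proposal follows essentially the same route as the paper: the paper also verifies the identity for a generator $x_1=\langle g,h\rangle^{-1}({}^{g}h\,h^{-1})$ by splitting with the axiom $(uv)\star w={}^{u}(v\star w)(u\star w)$, converting the two factors via Lemma \ref{C6} and compatibility condition (2), and recombining with condition (4) and the first-slot rule, then simply asserts that the general case follows by induction on word length; your write-up actually supplies more detail on that inductive step than the paper does. The one point to repair is the auxiliary fact you flag as key: you do not need (and cannot in general expect) a generator $y''\in{}^{L}[H,G]$ to fix $x_2$ itself, since $x_2$ need not lie in $\langle G,H\rangle$; the merge $\langle y',x_2\rangle\langle y'',x_2\rangle=\langle y''y',x_2\rangle$ only requires ${}^{y''}\langle y',x_2\rangle=\langle y',x_2\rangle$, i.e.\ that ${}^{L}[H,G]$ acts trivially on the ideal $\langle G,H\rangle$, which is exactly part (2) of Remark \ref{d1}. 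With that substitution your argument closes, and in the base case the paper's version shows you can even skip killing the conjugation on $x_2$: the identity $\langle ab,x_2\rangle={}^{a}\langle b,x_2\rangle\,\langle a,x_2\rangle$ (the first-slot axiom combined with condition (4)) absorbs the outer conjugation exactly as it stands.
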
	
	\begin{proof}
		Without loss of generality, we may assume that  $x_1 = {\langle g,h \rangle^{-1}({^gh}h^{-1})}.$ Then by considering the compatibility conditions of actions, we have $x_1 \star x_2  = ({\langle g,h \rangle^{-1}({^gh}h^{-1})}) \star x_2 = {^{\langle g,h \rangle^{-1}}({({^gh}h^{-1}) \star x_2})}({\langle g,h \rangle^{-1}} \star \ x_2) = {^{\langle g,h \rangle^{-1}}\langle{g^h{g^{-1}} , x_2}}\rangle \langle{\langle h,g \rangle}, x_2\rangle =  {^{\langle h,g \rangle}\langle{g^h{g^{-1}}, x_2}}\rangle \linebreak \langle{\langle h,g \rangle} ,  x_2\rangle  = \langle{\langle h,g \rangle}({g^h{g^{-1}}}) , x_2\rangle = \langle y_1,x_2 \rangle ,  $ where $y_1 = {\langle h,g \rangle}({g^h{g^{-1}}})\in {^L[H,G]}.$ So the poof holds for the generator of ${^L[G,H]}.$ This then extends to general elements of ${^L[G,H]}$ by inducting on the word length of $n$ of $x_1$ in terms of the generators and their inverses. Hence, for any $x_1, x_2\in {^L[G,H]} $, there exists $y_1\in {^L[H,G]} $ such that  $x_1 \star x_2 = \langle y_1,x_2 \rangle.$   
	\end{proof}	

	\begin{lemma}\label{R6}
		Let $G$ and $H$ be multiplicative Lie algebras acting on each other compatibly.
		Suppose $x_1, x_2 \in {^L[G,H]} $, then there exists $y_1, y_2\in {^L[H,G]} $ such that for all $g\in G \ \text{and} \ h\in H$ 
		$$^{^L[x_1,x_2]}{g} = {^{^L[y_1,y_2]}{g}}, \  ^{^L[x_1,x_2]}{h} = {^{^L[y_1,y_2]}{h}}.$$
	\end{lemma}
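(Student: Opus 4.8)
The plan is to expand both Lie commutators through the defining identity ${}^L[a,b] = (a\star b)^{-1}[a,b]$ and to match the two resulting factors separately. Writing ${}^L[x_1,x_2] = (x_1\star x_2)^{-1}[x_1,x_2]$ in $H$ and ${}^L[y_1,y_2] = (y_1\star y_2)^{-1}[y_1,y_2]$ in $G$, the action of ${}^L[x_1,x_2]$ on a point $g\in G$ factors as ${}^{(x_1\star x_2)^{-1}}\big({}^{[x_1,x_2]}g\big)$, and likewise on any $h\in H$. Hence it suffices to produce $y_1,y_2\in {}^L[H,G]$ for which (i) the group commutators $[x_1,x_2]$ and $[y_1,y_2]$ act identically on $G$ and $H$, and (ii) the $\star$-products $x_1\star x_2$ and $y_1\star y_2$ act identically on $G$ and $H$; composing (i) and (ii) then gives ${}^{{}^L[x_1,x_2]}g = {}^{{}^L[y_1,y_2]}g$ and the analogous identity on $H$.

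First I would fix $y_1,y_2$. By Proposition \ref{R2} choose $y_1,y_2\in {}^L[H,G]$ so that each pair $x_i,y_i$ satisfies $(3.1)$; Corollary \ref{R3} then shows that $[x_1,x_2]$ and $[y_1,y_2]$ also satisfy $(3.1)$, which is exactly requirement (i). This settles the commutator factor at once, so the whole difficulty is concentrated in requirement (ii).

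For (ii) I would rewrite each $\star$-product as a single bracket. Lemma \ref{R4} gives $x_1\star x_2 = \langle y_1,x_2\rangle\in H$, and the dual form of Lemma \ref{R4} (valid after interchanging ${}^L[G,H]$ and ${}^L[H,G]$) gives $y_1\star y_2 = \langle x_1',y_2\rangle\in G$ for a suitable $x_1'\in {}^L[G,H]$. Since these two brackets lie in different factors, I would transport both into a single group by means of compatibility condition (3) of Definition \ref{1}, namely ${}^{\langle g,h\rangle\langle h,g\rangle}g' = g'$ and ${}^{\langle g,h\rangle\langle h,g\rangle}h' = h'$: this yields that $\langle y_1,x_2\rangle$ acts on $G$ and $H$ as $\langle x_2,y_1\rangle^{-1}$ and that $\langle x_1',y_2\rangle$ acts as $\langle y_2,x_1'\rangle^{-1}$. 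Requirement (ii) then reduces to the single coincidence that $\langle x_2,y_1\rangle$ and $\langle y_2,x_1'\rangle$ induce the same action, which I would extract from the remaining compatibility conditions (2), (4), (5) of Definition \ref{1} together with Remark \ref{d1}(2).

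The hard part will be this last coincidence. Unlike the commutator factor, where Corollary \ref{R3} does all the work, the $\star$-factors $x_1\star x_2$ and $y_1\star y_2$ genuinely live in different groups, and matching their actions forces one to keep precise track of how Lemma \ref{R4} and the compatibility conditions fit together; in particular one must verify that the element of ${}^L[H,G]$ delivered by Lemma \ref{R4} is exactly the Proposition \ref{R2} partner $y_1$ of $x_1$, and dually that $x_1'$ is the partner of $y_1$. Once (ii) is established, combining it with (i) from Corollary \ref{R3} produces the required $y_1,y_2$ and completes the proof.
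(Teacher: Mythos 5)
Your proposal is correct and follows essentially the same route as the paper: the paper also expands ${}^L[x_1,x_2]=(x_1\star x_2)^{-1}[x_1,x_2]$, replaces the commutator factor by $[y_1,y_2]$ via Corollary \ref{R3}, and handles the $\star$-factor by Lemma \ref{R4} together with compatibility condition (3) of Definition \ref{1}, identifying the action of $\langle x_2,y_1\rangle$ with that of $y_2\star y_1=(y_1\star y_2)^{-1}$ before reassembling ${}^L[y_1,y_2]$. The ``hard coincidence'' you isolate is precisely the paper's (equally terse) step ${}^{\langle x_2,y_1\rangle}(\cdot)={}^{(y_2\star y_1)}(\cdot)$, i.e.\ the dual application of Lemma \ref{R4}.
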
	
	\begin{proof}
		Let  $x_1, x_2 \in {^L[G,H]} $ and $g\in G.$ Then by using Corollary \ref{R3} and Lemma \ref{R4}, and considering the compatibility conditions of actions, we have $ 	^{^L[x_1,x_2]}{g} = {^{(x_1\star x_2)^{-1}}{(^{[x_1,x_2]}{g})}} = {^{\langle y_1, x_2 \rangle^{-1}}{(^{[y_1,y_2]}{g})}} =  {^{\langle y_1, x_2 \rangle^{-1}}{(^{[y_1,y_2]}{g})}} =  {^{\langle x_2, y_1 \rangle}{(^{[y_1,y_2]}{g})}} =  {^{(y_2 \star y_1)}{(^{[y_1,y_2]}{g})}} = {^{^L[y_1,y_2]}{g}}. $ Similarly, we can see 	$^{^L[x_1,x_2]}{h} = {^{^L[y_1,y_2]}{h}},$  for all $h \in H.$ 
		  
	\end{proof}	
	
	Now we extend the Lemma \ref{R4} for $k$-th term,  ${^L[G,H]}^{(k)} = {^L[{^L[G,H]}^{(k-1)},{^L[G,H]}^{(k-1)}]} $, of the derived series of ${^L[G,H]}$.
		
	\begin{lemma}\label{R10}
	Let $G$ and $H$ be multiplicative Lie algebras acting on each other compatibly.
	Suppose $x, y \in {^L[G,H]}^{(k)} $, then there exists $ z\in {^L[H,G]}^{(k)} $ such that  $x \star y = \langle z,y \rangle,$ where ${^L[G,H]}^{(k)}$  denote the $k$-th term of the derived series of ${^L[G,H]}$.
\end{lemma}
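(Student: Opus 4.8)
The plan is to induct on $k$, with Lemma \ref{R4} serving as the base case $k=0$: for $x,y\in{}^{L}[G,H]^{(0)}={}^{L}[G,H]$ it already yields $z\in{}^{L}[H,G]^{(0)}={}^{L}[H,G]$ with $x\star y=\langle z,y\rangle$. Assuming the statement for $k-1$, I would take $x,y\in{}^{L}[G,H]^{(k)}={}^{L}[{}^{L}[G,H]^{(k-1)},{}^{L}[G,H]^{(k-1)}]$ and, mirroring the proof of Lemma \ref{R4}, reduce the first argument to a single generator $x={}^{L}[a,b]$ with $a,b\in{}^{L}[G,H]^{(k-1)}$, keeping $y$ arbitrary; the passage from this generator to an arbitrary word in the generators and their inverses is then carried out by induction on the word length of $x$, exactly as in that lemma.

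For the generator I would write ${}^{L}[a,b]=(a\star b)^{-1}[a,b]$ and apply axiom (3) of a multiplicative Lie algebra to split the product as $x\star y={}^{(a\star b)^{-1}}\left([a,b]\star y\right)\left((a\star b)^{-1}\star y\right)$. Since $y\in{}^{L}[G,H]^{(k)}\subseteq{}^{L}[G,H]^{(k-1)}$, every elementary star appearing here is a star of two elements of ${}^{L}[G,H]^{(k-1)}$, so the inductive hypothesis applies: it rewrites $a\star b$, $a\star y$ and $b\star y$ as brackets $\langle\,\cdot\,,\cdot\,\rangle$ with inner entries in ${}^{L}[H,G]^{(k-1)}$. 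In particular $a\star b=\langle c,b\rangle$ for some $c\in{}^{L}[H,G]^{(k-1)}$, and compatibility condition \ref{C2} converts $(a\star b)^{-1}\star y$ into bracket form exactly as $\langle g,h\rangle^{-1}\star x_2$ was handled in Lemma \ref{R4}.

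The substantive work is the factor $[a,b]\star y$. Here I would expand the group commutator $[a,b]=aba^{-1}b^{-1}$ by repeated use of axiom (3), reducing it to the elementary stars $a\star y$, $b\star y$ (dealt with by the inductive hypothesis) together with conjugations by $a$ and $b$. These conjugations must be transported from the $G$-side to the $H$-side: the action-correspondence between ${}^{L}[G,H]^{(k-1)}$ and ${}^{L}[H,G]^{(k-1)}$ --- obtained by iterating Proposition \ref{R2}, Corollary \ref{R3} and Lemma \ref{R6} --- supplies partners $c,d\in{}^{L}[H,G]^{(k-1)}$ of $a,b$ acting identically on $G$ and $H$. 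Assembling the factors, and using Remark \ref{d1} to commute the brackets past the elements of ${}^{L}[G,H]$ and ${}^{L}[H,G]$ that centralise them, should collapse the whole expression into a single bracket $\langle z,y\rangle$ in which $z$ is, up to inverse, the Lie commutator ${}^{L}[c,d]$ of the partners, and hence lies in ${}^{L}[{}^{L}[H,G]^{(k-1)},{}^{L}[H,G]^{(k-1)}]={}^{L}[H,G]^{(k)}$.

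The main obstacle is precisely this control of depth. Lemma \ref{R4} on its own already produces some partner $z\in{}^{L}[H,G]$, so the entire content of the statement is that $z$ can be taken in the deeper term ${}^{L}[H,G]^{(k)}$. Securing this forces one to synchronise the star-correspondence of the inductive hypothesis with the action-correspondence at level $k-1$, so that the partner of a generator ${}^{L}[a,b]$ is itself a Lie commutator of the partners of $a$ and $b$; keeping this identification level-consistent through the commutator expansion, and then checking that the word-length induction on $x$ preserves membership in ${}^{L}[H,G]^{(k)}$, is the delicate point. Once it is in place, the computation closes exactly as in Lemma \ref{R4}.
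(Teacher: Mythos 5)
Your overall scaffolding --- induction on $k$ with Lemma \ref{R4} as the base case, reduction to a generator $x={^L[a,b]}$ with $a,b\in{^L[G,H]}^{(k-1)}$, the split $x\star y={^{(a\star b)^{-1}}}([a,b]\star y)\cdot((a\star b)^{-1}\star y)$ via axiom (3), and a closing word-length induction --- is the same as the paper's. The gap is in the one step you defer. Expanding $[a,b]=aba^{-1}b^{-1}$ by repeated use of axiom (3) produces four elementary stars $a\star y$, $b\star y$, $a^{-1}\star y$, $b^{-1}\star y$; the inductive hypothesis, being purely existential, returns four unrelated witnesses in ${^L[H,G]}^{(k-1)}$, and nothing in your argument forces the product of these first entries (suitably conjugated) to be a Lie commutator of level-$(k-1)$ elements, which is exactly what is needed to land $z$ in ${^L[H,G]}^{(k)}$ rather than merely in ${^L[H,G]}$. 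You correctly name this synchronisation as ``the delicate point'', but the proposal does not supply the idea that resolves it, and the route you choose makes it harder.

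The paper's resolution is to \emph{not} expand the commutator. It writes $[a,b]={^a b}\,b^{-1}$, replaces the conjugator $a$ by its action-partner $c\in{^L[H,G]}^{(k-1)}$ (Proposition \ref{R2} and its iterates), so that $[a,b]={^c b}\,b^{-1}$ is literally of the form ${^g h}h^{-1}$ with $g\in G$, $h\in H$, and then applies compatibility condition (5) of Definition \ref{1} (equivalently Lemma \ref{C6}) to obtain $({^c b}\,b^{-1})\star y=\langle c\,{^b c^{-1}},y\rangle$ in a single step. The synchronisation is then automatic: on a generator $\langle g,h\rangle^{-1}({^g h}h^{-1})$ of ${^L[G,H]}$, the star-partner produced by Lemma \ref{R4} and the action-partner produced by Proposition \ref{R1} are the \emph{same} element $\langle h,g\rangle(g\,{^h g^{-1}})$, and both correspondences propagate identically through the word-length and commutator inductions. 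With the same $c$ appearing in both factors, and with $(a\star b)^{-1}\star y=\langle c,b\rangle^{-1}\star y=\langle\langle b,c\rangle,y\rangle=\langle d\star c,y\rangle$ where $d$ is the partner of $b$, the two brackets combine via the action axioms into $\langle(d\star c)(c\,{^d c^{-1}}),y\rangle=\langle{^L[c,d]},y\rangle$, which is what places $z$ in ${^L[H,G]}^{(k)}$. The missing idea is thus the normal form ${^a b}\,b^{-1}$ for the commutator together with compatibility condition (5); without it your expansion does not close.
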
	
\begin{proof}
To prove our claim, we proceed by induction on $k.$ For $k = 0,$ the claim reduces to Lemma \ref{R4}. Now suppose $k=1.$ Let  $x ={^L[x_1,x_2]}, \ y \in {^L[G,H]}^{(1)},$ where $x_1,x_2 \in {^L[G,H]}.$ Then by Lemma \ref{R4} and Proposition \ref{R2}, and taking into account the compatibility conditions of actions, we have   
	\begin{align*}
	{^L[x_1,x_2]}\star y &= {((x_1\star x_2)^{-1}[x_1,x_2])} \star y \\
	&= {(\langle y_1,x_2 \rangle^{-1}({^{x_1}{x_2}}{x_2}^{-1}))} \star y \\
	&= {(\langle y_1,x_2 \rangle^{-1}({^{y_1}{x_2}}{x_2}^{-1}))} \star y \\
	&= {^{\langle y_1,x_2 \rangle^{-1}}{({({^{y_1}{x_2}}{x_2}^{-1})} \star y )}} ({\langle y_1,x_2 \rangle^{-1} \star y }) \\
	&= {^{\langle x_2,y_1 \rangle}{\langle{{y_1}^{x_2}{y_1}^{-1}} , y \rangle}} \langle{\langle x_2,y_1 \rangle , y }\rangle \\
	&= \langle{\langle x_2,y_1 \rangle ({y_1}^{x_2}{y_1}^{-1}) , y }\rangle \\
	&= \langle{ (y_2\star y_1) ({y_1}^{y_2}{y_1}^{-1}) , y }\rangle \\
	&= \langle{{^L[y_1,y_2]} , y }\rangle\\
	&= \langle{z , y }\rangle, \ \text{where} \ z = {^L[y_1,y_2]} \in{^L[G,H]}^{(1)} .
\end{align*}
 Hence, for  $x, y\in {^L[G,H]}^{(1)} $, there exists $z \in {^L[H,G]} $ such that  $x \star y = \langle z, y \rangle.$ 
 
Now, for $k=2.$ Let $x = {^L[^L[x_1,x_2],^L[x_3,x_4]]}, \  y \in {^L[G,H]}^{(2)},$ where $x_1,x_2,x_3,x_4 \in {^L[G,H]}.$ Then by using Lemma \ref{R6}, we have  
\begin{equation*}
\begin{split}
^L[^L[x_1,x_2],^L[x_3,x_4]]\star y &= {((^L[x_1,x_2]\star {^L[x_3,x_4])^{-1}}[^L[x_1,x_2],^L[x_3,x_4]])} \star y\\
&\hspace{-2.5cm}= ^{(^L[x_1,x_2]\star ^L[x_3,x_4])^{-1}}({^{^L[x_1,x_2]}{^L[x_3,x_4]}}{^L[x_3,x_4]}^{-1} \star y) ((^L[x_1,x_2]\star {^L[x_3,x_4])^{-1}} \star y)\\
&\hspace{-2.5cm}= ^{\langle ^L[y_1,y_2],^L[x_3,x_4] \rangle^{-1}}({^{^L[y_1,y_2]}{^L[x_3,x_4]}}{^L[x_3,x_4]}^{-1} \star  y) (\langle ^L[y_1,y_2],^L[x_3,x_4] \rangle^{-1} \star y)\\
&\hspace{-2.5cm}	= ^{\langle ^L[x_3,x_4],^L[y_1,y_2] \rangle}(\langle{^L[y_1,y_2]}{^{^L[x_3,x_4]}}{^L[y_1,y_2]}^{-1},  y\rangle) \langle \langle ^L[x_3,x_4],^L[y_1,y_2] \rangle, y\rangle\\
&\hspace{-2.5cm}= \langle{\langle ^L[x_3,x_4],^L[y_1,y_2] \rangle}({^L[y_1,y_2]}{^{^L[x_3,x_4]}}{^L[y_1,y_2]}^{-1}), y\rangle\\
&\hspace{-2.5cm}= \langle{( ^L[y_3,y_4] \star {^L[y_1,y_2]} )}({^L[y_1,y_2]}{^{^L[y_3,y_4]}}{^L[y_1,y_2]}^{-1}), y\rangle \\
&\hspace{-2.5cm}	= \langle^L[^L[y_1,y_2],^L[y_3,y_4]], y\rangle\\
&\hspace{-2.5cm}= \langle z, y\rangle, \   \text{where} \ z = {^L[^L[y_1,y_2],^L[y_3,y_4]]} \in{^L[G,H]}^{(2)}.
\end{split}	
\end{equation*}
       
Now, we can observe that this result true for the general case, i.e, given $x, y \in {^L[G,H]}^{(k)} $, there exists $ z\in {^L[H,G]}^{(k)} $ such that  $x \star y = \langle z,y \rangle.$
\end{proof}

\begin{proposition} \label{R9}
Suppose the multiplicative Lie algebras $G$ and $H$ act compatibly on each other.
\begin{enumerate}
\item If $^L[G, H]$ is Lie nilpotent, then so is $^L[H, G]$ and $cl(^L[H, G])\leq cl(^L[G, H])+1.$\vspace{0.15cm}
\item  If $^L[G, H]$ is Lie solvable, then so is $^L[H, G]$ and $l(^L[H, G])\leq l(^L[G, H])+1.$
\end{enumerate}
\end{proposition}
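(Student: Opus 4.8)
The plan is to handle both statements by one mechanism. Write $n=l({^L[G,H]})$ in the solvable case (so ${^L[G,H]}^{(n)}=\{1\}$) and $n=cl({^L[G,H]})$ in the nilpotent case (so $L_n({^L[G,H]})=\{1\}$). In each case it suffices to show that the corresponding $(n{+}1)$-th term of the series of ${^L[H,G]}$ is trivial, and for that it is enough to kill its generators ${^L[a,b]}=(a\star b)^{-1}[a,b]$, i.e. to prove separately that $a\star b=1$ and that the ordinary commutator $[a,b]=1$. All the lemmas of this section hold with the roles of $G$ and $H$ interchanged, and I will use them in that form throughout.

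For the solvable case the factor $a\star b$ is immediate. Since ${^L[H,G]}^{(n+1)}={^L[{^L[H,G]}^{(n)},{^L[H,G]}^{(n)}]}$, take $a,b\in{^L[H,G]}^{(n)}$; then Lemma~\ref{R10} gives $a\star b=\langle z,b\rangle$ with $z\in{^L[G,H]}^{(n)}=\{1\}$, whence $a\star b=\langle 1,b\rangle=1$. To kill $[a,b]$ I would first establish, by induction on $k$, a \emph{graded conjugation correspondence}: every $a\in{^L[H,G]}^{(k)}$ acts on $G$ and $H$ exactly as some $x\in{^L[G,H]}^{(k)}$ does. The case $k=0$ is Proposition~\ref{R2}, and the inductive step copies Lemma~\ref{R6}: for a generator ${^L[a_1,a_2]}$ of ${^L[H,G]}^{(k+1)}$ with $a_1,a_2\in{^L[H,G]}^{(k)}$ one writes $^{^L[a_1,a_2]}g={^{(a_1\star a_2)^{-1}}(^{[a_1,a_2]}g)}$, replaces $a_1\star a_2$ by $\langle z,a_2\rangle$ with $z\in{^L[G,H]}^{(k)}$ via Lemma~\ref{R10}, replaces $[a_1,a_2]$ by $[x_1,x_2]$ via the induction hypothesis and Corollary~\ref{R3}, and recombines the two factors into the conjugation by ${^L[x_1,x_2]}\in{^L[G,H]}^{(k+1)}$; Corollary~\ref{R3} then passes from generators to arbitrary words.

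Granting this, the endgame is short: for $a,b\in{^L[H,G]}^{(n)}$ the correspondence makes $a$ act like an element of ${^L[G,H]}^{(n)}=\{1\}$, hence trivially, so $[a,b]={^ab}b^{-1}=bb^{-1}=1$; together with $a\star b=1$ this gives ${^L[a,b]}=1$, and therefore ${^L[H,G]}^{(n+1)}=\{1\}$, i.e. $l({^L[H,G]})\le n+1$. This also explains the extra unit: the term ${^L[H,G]}^{(n)}$ is forced to be abelian with trivial $\star$, but need not itself be trivial, so one further Lie-commutator step is genuinely needed. The nilpotent case runs along the lower central series in the identical way, using the analogues for $L_k$ of Lemmas~\ref{R4},~\ref{R6} and~\ref{R10} (that conjugation by $b\in L_k({^L[H,G]})$ matches that of an element of $L_k({^L[G,H]})$, and that $b\star a=\langle w,a\rangle$ with $w\in L_k({^L[G,H]})$): when $L_n({^L[G,H]})=\{1\}$, every $b\in L_n({^L[H,G]})$ acts trivially, so for $a\in{^L[H,G]}$ one gets $[b,a]={^ba}a^{-1}=1$ and $b\star a=\langle w,a\rangle=1$, hence ${^L[b,a]}=1$ and so ${^L[a,b]}={^L[b,a]}^{-1}=1$ by Proposition~\ref{Lie}, giving $L_{n+1}({^L[H,G]})=\{1\}$ and $cl({^L[H,G]})\le n+1$.

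The step I expect to be the real work is the inductive proof of the graded conjugation correspondence, specifically the bookkeeping that keeps $\langle z,a_2\rangle^{-1}$ and $[x_1,x_2]$ in register so that they fuse into the single conjugation by ${^L[x_1,x_2]}$ living in ${^L[G,H]}^{(k+1)}$ — this is where the compatibility conditions and the antisymmetry ${^L[a,b]}{^L[b,a]}=1$ of Proposition~\ref{Lie} must be applied carefully, exactly as in Lemma~\ref{R6}. For the nilpotent statement there is the additional preliminary of proving the lower-central-series versions of Lemmas~\ref{R4},~\ref{R6} and~\ref{R10}, since the excerpt supplies these correspondences only for the derived series.
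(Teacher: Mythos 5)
Your proposal is correct and is essentially the paper's own proof: for part (2) the paper argues exactly as you do, first establishing by induction on $k$ (via Lemma~\ref{R6} and word-length induction) that every $v\in{^L[H,G]}^{(k)}$ acts on $G$ and $H$ as some $u\in{^L[G,H]}^{(k)}$, and then killing both factors of $^L[v,v']=(v\star v')^{-1}[v,v']$ at level $n$ using that correspondence together with Lemma~\ref{R10}. For part (1) the paper reaches the same conclusion by expanding the left-normed commutator $^L[y_1,\dots,y_{n+2}]$ and replacing $^L[y_1,\dots,y_{n+1}]$ with $^L[x_1,\dots,x_{n+1}]=1$ through ``repeated application'' of Lemmas~\ref{R6} and~\ref{R10}; this is the same lower-central-series bookkeeping you describe, merely not packaged as separate $L_k$-analogues, so the difference is organizational rather than substantive.
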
	
\begin{proof}$(1)$
Suppose $^L[G, H]$ is Lie nilpotent of class $n.$ Then $^L[z_1,z_2, \cdots, z_{n+1}] = 1_H$ for all $z_i \in {^L[G, H]}.$ To prove our claim, it suffices to show $^L[y_1,y_2, \cdots, y_{n+2}] = 1_G$ for all $y_i \in {^L[H, G]}. $ We have 
\begin{align*}
^L[y_1,y_2, \cdots, y_{n+2}] &=  (^L[y_1,y_2, \cdots, y_{n+1}]\star y_{n+2})^{-1} [^L[y_1,y_2, \cdots, y_{n+1}], y_{n+2} ]\\& = (^L[y_1,y_2, \cdots, y_{n+1}]\star y_{n+2})^{-1}{{^{^L[y_1,y_2, \cdots, y_{n+1}]}}{y_{n+2}}} {y_{n+2}}^{-1}.
\end{align*}
   Next, by Proposition \ref{R2}, there exist $x_1,x_2 \cdots x_{n+1} \in {^L[H, G]} $ such that each pair $x_i, y_i$ satisfies $^{x_i}g' =~ ^{y_i}g'$   and $ ^{x_i}h'  = {^{y_i}h'},$ for all $g'\in G $ and $h'\in H$. Thus, by repeated application of Lemma \ref{R6} and by the Lemma \ref{R10}, we have $^L[y_1,y_2, \cdots, y_{n+2}] = \big < ^L[x_1,x_2, \cdots, x_{n+1}], \ y_{n+2}  \big>^{-1} {{^{^L[x_1,x_2, \cdots, x_{n+1}]}}{y_{n+2}}} {y_{n+2}}^{-1}.$ Now, by assumption, we have $^L[x_1,x_2, \cdots, x_{n+1}] = 1,$ so  $^L[y_1,y_2, \cdots, y_{n+2}] = 1  $ for all  $y_i \in {^L[H, G]}.$ Hence, $^L[H, G]$ is Lie nilpotent with $cl(^L[H, G])\leq cl(^L[G, H])+1.$ 
  
$(2)$ For a group $K,$ let $K^{(k)}$ denote the $k$-th term of the derived series. We first claim that given $v \in {^L[H, G]^{(k)}}, $ there exists $u \in {^L[G, H]^{(k)}} $ so that the pair $u, v$ satisfies Proposition \ref{R2}. We proceed with the proof of the claim by induction on $k.$ For $k = 0,$ the claim reduces to Proposition \ref{R2}. Now suppose $k\geq1.$ Let $v_1, v_2 \in {^L[H, G]^{(k-1)}}.$ By the induction hypothesis, there exist  $u_1, u_2 \in {^L[H, h]^{(k-1)}}$ such that each of the pairs $u_i, v_i, i = 1,2$ satisfies Proposition \ref{R2}. So by Lemma \ref{R6}, the pair of Lie commutator $^L[u_1, u_2], ^L[v_1, v_2]$ also satisfies Proposition \ref{R2}. Since $^L[H, G]^{(k)}$ is generated by Lie commutator $^L[v_1, v_2],$ where $v_i \in  {^L[H, G]^{(k-1)}}, $ so the claim holds for the generators of $^L[H, G]^{(k)}.$ Now, by induction on the word length of elements of $^L[H, G]^{(k)}$, for given $v \in {^L[H, G]^{(k)}}, $ there exists $u \in {^L[G, H]^{(k)}} $ so that $^ug' = {^vg'}  \ \ \text{and} \ \ ^uh' = {^vh'}$ for all $g'\in G $ and  $h'\in H. $ Now suppose that $^L[G, H]$ is Lie solvable of derived length $n.$ Given a generators $^L[v, v'] \in {^L[H, G]^{(n+1)}},$ where $v, v'\in {^L[H, G]^{(n)}},$ by our claim there exists $u\in  {^L[G, H]^{(n)}} $ so that $^vv' = {^uv'}$ and by Lemma \ref{R10}, there exists $u'\in  {^L[G, H]^{(n)}} $ such that $v\star v' = \langle u', v' \rangle.$ However, by our hypothesis, $^L[G, H]^{(n)} = 1,$ so $u= u'=1,$ implying $^L[v, v'] =   (v\star v')^{-1} [v, v'] = \langle u', v' \rangle {^uv'} v'^{-1} = 1.$ We conclude that $^L[H, G]^{(n+1)} = 1,$ so $^L[H, G]$ is Lie solvable. Furthermore, we have $l(^L[H, G])\leq l(^L[G, H])+1 $ as claimed.      
\end{proof}

\begin{theorem}
Let $I$ and $J$ be ideals of a multiplicative Lie algebras $G$ and $H$, respectively, such that $I$ is invariant under the group action of $H,$ and $J$ is invariant under the group action of $G.$ Then the subgroup $I \otimes J = \big < \  (a\otimes b) \ | \ a \in I \ \text{and} \ b \in J  \  \big > $ is an ideal of multiplicative Lie algebra $(G\otimes H, ., \star).$
\end{theorem}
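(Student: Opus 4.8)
The plan is to verify the two defining properties of an ideal of a multiplicative Lie algebra for the subgroup $I\otimes J$: that it is normal in the underlying group $(G\otimes H,\cdot)$, and that it is closed under the bracket, i.e. $w\star k\in I\otimes J$ for every $w\in G\otimes H$ and $k\in I\otimes J$ (compatibility of the actions is assumed throughout this section). In both parts I would first reduce to generators. Because the normalizer of a subgroup is itself a subgroup and $G\otimes H$ is generated by the symbols $g\otimes h$, normality follows once I check that conjugation of each generator $a\otimes b$ (with $a\in I$, $b\in J$) by each $g\otimes h$ lands back in $I\otimes J$. Likewise, using axioms (2) and (3) of a multiplicative Lie algebra together with the normality established first, the sets $\{w:w\star k\in I\otimes J\}$ and $\{k:w\star k\in I\otimes J\}$ are subgroups, so the bracket condition reduces to checking $(g\otimes h)\star(a\otimes b)\in I\otimes J$ on generators.

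For normality, I would apply identity (3) of Proposition \ref{s1}, which gives $(g\otimes h)(a\otimes b)(g\otimes h)^{-1}={}^{[g,h]}(a\otimes b)$ with $[g,h]={}^g h\,h^{-1}\in H$, and then Proposition \ref{s2} to expand this as $({}^{[g,h]}a\otimes{}^{[g,h]}b)$. Since $[g,h]\in H$, the hypothesis that $I$ is invariant under the group action of $H$ gives ${}^{[g,h]}a\in I$, while normality of $J$ in $H$ gives ${}^{[g,h]}b\in J$; hence the conjugate lies in $I\otimes J$, and normality follows.

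For the bracket condition, relation \ref{C10} gives $(g\otimes h)\star(a\otimes b)=\langle h,g\rangle^{-1}\otimes\langle a,b\rangle$. The hard part is that this element looks as though it lies only in $G\otimes H$ rather than in $I\otimes J$: its first coordinate $\langle h,g\rangle^{-1}\in G$ need not lie in $I$, and the ideal data $a,b$ have migrated into the single factor $\langle a,b\rangle$ in the $H$-slot. The key idea I would use is to rewrite the product through relation \ref{C8}. Setting $x={}^{b^{-1}}(\langle h,g\rangle^{-1})\in G$ so that ${}^b x=\langle h,g\rangle^{-1}$, relation \ref{C8} (with $x'=a$, $y=b$) yields
\[
\langle h,g\rangle^{-1}\otimes\langle a,b\rangle=\big({}^x a\otimes\langle x,b\rangle^{-1}\big)^{-1}\big((x\star a)\otimes{}^a b\big).
\]
It then remains to check that each coordinate on the right lies in the correct ideal. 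Here ${}^x a\in I$ and $x\star a\in I$ because $I$ is an ideal of $G$, and ${}^a b\in J$ because $a\in I\subseteq G$ and $J$ is $G$-invariant. The remaining and most delicate point is $\langle x,b\rangle\in J$, which I would obtain by combining compatibility conditions \ref{C4} and \ref{C2}: $\langle x,b\rangle=\langle{}^{b^{-1}}(\langle h,g\rangle^{-1}),b\rangle={}^{b^{-1}}\langle\langle h,g\rangle^{-1},b\rangle={}^{b^{-1}}(\langle g,h\rangle\star b)$, after which $J$ being an ideal of $H$ gives $\langle g,h\rangle\star b\in J$ and normality of $J$ keeps the conjugate in $J$. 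With all four coordinates in $I$ or $J$, the right-hand side is a product of generators of $I\otimes J$, so $(g\otimes h)\star(a\otimes b)\in I\otimes J$.

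I expect the main obstacle to be precisely this last computation: recognizing that the deceptively mixed element $\langle h,g\rangle^{-1}\otimes\langle a,b\rangle$ can be re-expressed via \ref{C8} with the conjugated element $x={}^{b^{-1}}(\langle h,g\rangle^{-1})$, and that compatibility conditions \ref{C4} and \ref{C2} are exactly what force $\langle x,b\rangle$ into $J$. Everything else—the reductions to generators and the normality check—is routine once Propositions \ref{s1} and \ref{s2} are invoked.
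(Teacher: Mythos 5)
Your proposal is correct and takes essentially the same route as the paper: normality is handled identically via $(g\otimes h)(a\otimes b)(g\otimes h)^{-1}={}^{[g,h]}a\otimes{}^{[g,h]}b$ (Propositions \ref{s2} and \ref{s1}), and the bracket is handled by rewriting $(g\otimes h)\star(a\otimes b)=\langle h,g\rangle^{-1}\otimes\langle a,b\rangle$ through relation (\ref{C8}) of Definition \ref{s3} together with the compatibility conditions, so that every resulting tensor factor lands in $I$ or $J$. The only real difference is the substitution into that relation: the paper first converts $\langle a,b\rangle$ into $\langle{}^{b}a,b^{-1}\rangle^{-1}$ and places the ideal elements $c={}^{b}a$ and $b^{-1}$ in the $x,y$ slots (ending with generators $(c\star g')\otimes{}^{h'}b^{-1}$ and ${}^{b^{-1}}c\otimes(h'\star b^{-1})$, which also calls on condition (\ref{C3})), whereas you place $a,b$ directly in the $x',y$ slots with $x={}^{b^{-1}}(\langle h,g\rangle^{-1})$ — an equivalent and slightly more economical bookkeeping that still relies on conditions (\ref{C4}) and (\ref{C2}) to show $\langle x,b\rangle\in J$.
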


\begin{proof}
	Let $g\otimes h \in G\otimes H$ and $a\otimes b \in I\otimes J.$ Then by the Proposition \ref{s2} and \ref{s1}, we have $^{(g\otimes h)}(a\otimes b) = {^{[g, h]} (a\otimes b)} = (^{[g, h]}a\otimes {^{[g, h]} b}).$  Since $I$ and $J$ are ideals of $G$ and $H$ respectively, and  $I$ is invariant under the group action of $H,$ and $J$ is invariant under the group action of $G,$ so $^{(g\otimes h)}(a\otimes b) \in I\otimes J.$ Hence, any generator of $I\otimes J$ is mapped to another generator under the action of conjugation by elements of $G\otimes H.$ It follows $I\otimes J \trianglelefteq G\otimes H.$
	
	Next, by the identity $(5)$ (Definition \ref{s3}) and taking into account the compatibility of actions, we have 
	\begin{align*}
	\hspace{0.2cm}(g\otimes h) \star ( a\otimes b)
	 &=  {\langle h, g \rangle}^{-1} \otimes {\langle a, b \rangle}\\ 
	  &=  {\langle h, g \rangle}^{-1} \otimes {\langle ^ba, b^{-1} \rangle}^{-1}\\ 
	  &= {^{^ba}}{\langle {^{^ba^{-1}}h}, {^{^ba^{-1}}g} \rangle}^{-1} \otimes {{\langle ^ba, b^{-1} \rangle}^{-1}}\\
	  &= ((c \star {\langle ^{c^{-1}}h, {^{c^{-1}}g} \rangle}^{-1}) \otimes {^{ {\langle ^{c^{-1}}h, {^{c^{-1}}g} \rangle}^{-1}}b^{-1})}(^{b^{-1}}c \otimes \langle {\langle ^{c^{-1}}h, {^{c^{-1}}g} \rangle}^{-1}, b^{-1} \rangle  )^{-1} \\ &= ((c \star {\langle ^{c^{-1}}h, {^{c^{-1}}g} \rangle}^{-1}) \otimes {^{ {\langle ^{c^{-1}}g, {^{c^{-1}}h} \rangle}}b^{-1}})({^{b^{-1}}c} \otimes ({\langle ^{c^{-1}}g, {^{c^{-1}}h} \rangle}\star b^{-1} )  )^{-1} \\
	    &= ((c \star g') \otimes {^{ h'}b^{-1}})(^{b^{-1}}c \otimes  (h' \star b^{-1})   )^{-1}, 
	\end{align*}
	where, $^ba = c \in I, \ g' = {\langle {^{c^{-1}}h}, {^{c^{-1}}g} \rangle}^{-1} \in G \ \text{and} \ h' = {\langle ^{c^{-1}}g, {^{c^{-1}}h} \rangle} \in H  .$ Since, $I$ and $J$ are ideals and $I$ is invariant under the group action of $H,$ $(g\otimes h) \star ( a\otimes b) \in I\otimes J.$  Hence, $I \otimes J$ is an ideal of $G\otimes H.$
\end{proof}

\begin{lemma}\label{Dee}
Let $G$ and $H$ be multiplicative Lie algebras acting on each other in a compatible way. Then for $g, g'\in G $ and $h, h' \in H,$ element $^L[(g\otimes h), (g^{\prime}\otimes h^{\prime})]$ can be expressed as $$^{{^L[h,g]^{-1}}{\langle g', h' \rangle}}({\langle h, g \rangle}^{-1}   \otimes  {^L[g',h']})   ({^L[h,g]^{-1}}   \otimes {\langle g', h' \rangle}) ({^L[h,g]^{-1}}   \otimes {^L[g',h']}).$$

\end{lemma}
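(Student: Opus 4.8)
The plan is to unfold the Lie commutator as
$^L[(g\otimes h),(g'\otimes h')]=\big((g\otimes h)\star(g'\otimes h')\big)^{-1}\,[(g\otimes h),(g'\otimes h')]$ and to evaluate its two factors separately. For the $\star$-factor I would apply relation $(5)$ of Definition \ref{s3}, giving $(g\otimes h)\star(g'\otimes h')=\langle h,g\rangle^{-1}\otimes\langle g',h'\rangle$; for the group commutator I would invoke Proposition \ref{s1}$(6)$, obtaining $[(g\otimes h),(g'\otimes h')]=(g\,{}^hg^{-1})\otimes({}^{g'}h'\,h'^{-1})$. The key observation is then that, directly from the definitions of the generators of ${}^L[H,G]$ and ${}^L[G,H]$ in Proposition \ref{Ideal}, one has $g\,{}^hg^{-1}={}^L[h,g]^{-1}\,\langle h,g\rangle^{-1}$ and ${}^{g'}h'\,h'^{-1}=\langle g',h'\rangle\,{}^L[g',h']$. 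Substituting, the whole expression becomes $\big(\langle h,g\rangle^{-1}\otimes\langle g',h'\rangle\big)^{-1}\cdot\big(({}^L[h,g]^{-1}\langle h,g\rangle^{-1})\otimes(\langle g',h'\rangle\,{}^L[g',h'])\big)$.

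Next I would expand the single tensor $\big({}^L[h,g]^{-1}\langle h,g\rangle^{-1}\big)\otimes\big(\langle g',h'\rangle\,{}^L[g',h']\big)$ by first splitting the left-hand entry with relation $(2)$ of Definition \ref{s3} and then splitting each right-hand entry with relation $(1)$. Writing $A={}^L[h,g]^{-1}$, $B=\langle h,g\rangle^{-1}$, $C=\langle g',h'\rangle$, $D={}^L[g',h']$ for brevity, this yields $(AB)\otimes(CD)={}^A(B\otimes C)\,{}^{AC}(B\otimes D)\,(A\otimes C)\,{}^C(A\otimes D)$, where the superscripts denote the actions of $G$ and $H$ on $G\otimes H$ supplied by Proposition \ref{s2}.

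The simplification then rests on Remark \ref{d1}. Since $A\in{}^L[H,G]$ commutes with $B\in\langle H,G\rangle$ inside $G$ (Remark \ref{d1}$(1)$) and acts trivially on $C\in\langle G,H\rangle$ (Remark \ref{d1}$(2)$), one gets ${}^A(B\otimes C)={}^AB\otimes{}^AC=B\otimes C$, and this factor cancels the inverse $\big(\langle h,g\rangle^{-1}\otimes\langle g',h'\rangle\big)^{-1}$ of the $\star$-part. After the cancellation the expression reads ${}^{AC}(B\otimes D)\,(A\otimes C)\,{}^C(A\otimes D)$, which is exactly the asserted form once one checks that the last factor is unchanged by the action of $C$, that is, ${}^C(A\otimes D)={}^CA\otimes{}^CD=A\otimes D$.

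The main obstacle is precisely this last triviality. Its easy half, ${}^CD=D$, is immediate from Remark \ref{d1}$(1)$, because $D={}^L[g',h']$ commutes with $C=\langle g',h'\rangle\in\langle G,H\rangle$ inside $H$. The delicate half is ${}^CA=A$, i.e.\ that $\langle g',h'\rangle\in\langle G,H\rangle$ fixes ${}^L[h,g]^{-1}\in{}^L[H,G]$ under the action of $H$ on $G$; this is the \emph{reverse} of the triviality recorded in Remark \ref{d1}$(2)$ and is not literally one of its two stated cases. I would establish it from the compatibility conditions of Definition \ref{1} together with Proposition \ref{R2} in its interchanged form, transferring the action of $\langle g',h'\rangle$ on the generator ${}^L[h,g]^{-1}$ into an action taking place inside $G$, which is then seen to be trivial. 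With this in hand, collecting the three surviving factors produces the displayed expression and completes the proof.
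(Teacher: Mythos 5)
Your proposal follows the paper's own proof essentially step for step: unfold the Lie commutator via relation (5) of Definition \ref{s3} and Proposition \ref{s1}(6), rewrite $g\,{^h g^{-1}}$ and ${^{g'}h'}h'^{-1}$ through the generators ${^L[h,g]}$ and ${^L[g',h']}$, expand the resulting single tensor with relations (2) and (1) of Definition \ref{s3}, and then cancel the trivial actions using Remark \ref{d1} and the compatibility conditions, so that the first of the four factors absorbs the inverse of the $\star$-part. The only place you are more explicit than the paper is in isolating the claim ${^{\langle g',h'\rangle}}\big({^L[h,g]}^{-1}\big)={^L[h,g]}^{-1}$, which the paper passes over silently under ``taking into account the compatibility conditions''; it does hold (for instance via compatibility condition (3) of Definition \ref{1} combined with Remark \ref{d1}(1)), so your argument is correct and essentially identical to the paper's.
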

\begin{proof}
	Suppose $g, g'\in G $ and $h, h' \in H,$ then by identity $(5)$ (Definition \ref{s3}) and Proposition \ref{s1}, we have
	\begin{equation*}
	\begin{split}
		\hspace{0.5cm} ^L[(g\otimes h), (g^{\prime}\otimes h^{\prime})] & = ((g\otimes h) \star (g^{\prime}\otimes h^{\prime}))^{-1}[  g\otimes h, g^{\prime}\otimes h^{\prime}]\\
		& = ( \langle h, g \rangle ^{-1} \otimes \langle g', h' \rangle)^{-1} (g^h{g^{-1}} \otimes {^{g'}{h'}{h'}^{-1})}\\
		& = ( \langle h, g \rangle ^{-1} \otimes \langle g', h' \rangle)^{-1} ((^h{g}g^{-1})^{-1} \otimes {^{g'}{h'}{h'}^{-1})}\hspace{3cm} (1) 
	\end{split}
	\end{equation*}
	Now, by Proposition \ref{p1} and taking into account the compatibility conditions of actions, we have
	\begin{equation*}
		\begin{split}
			\hspace{-0.01cm} (^h{g}g^{-1})^{-1} \otimes {^{g'}{h'}{h'}^{-1}} &= (^h{g}g^{-1})^{-1} {\langle h, g \rangle}  {\langle h, g \rangle}^{-1}   \otimes {\langle g', h' \rangle} {\langle g', h' \rangle}^{-1} {^{g'}{h'}{h'}^{-1}} \\
			&= {(^L[h,g])^{-1}} {\langle h, g \rangle}^{-1}   \otimes {\langle g', h' \rangle} ^L[g',h'] \\
			&= ^{{(^L[h,g])^{-1}}}({\langle h, g \rangle}^{-1}   \otimes {\langle g', h' \rangle} ^L[g',h']) ({(^L[h,g])^{-1}}   \otimes {\langle g', h' \rangle} ^L[g',h']) \\
			&= ^{{(^L[h,g])^{-1}}}({\langle h, g \rangle}^{-1}   \otimes {\langle g', h' \rangle}) ^{{(^L[h,g])^{-1}}{\langle g', h' \rangle}}({\langle h, g \rangle}^{-1}   \otimes  {^L[g',h']}) \\& \hspace{0.5cm} ({(^L[h,g])^{-1}}   \otimes {\langle g', h' \rangle}) ^{{\langle g', h' \rangle}}({(^L[h,g])^{-1}}   \otimes {^L[g',h']}) \\
			&= ({\langle h, g \rangle}^{-1}   \otimes {\langle g', h' \rangle}) ^{{(^L[h,g])^{-1}}{\langle g', h' \rangle}}({\langle h, g \rangle}^{-1}   \otimes  {^L[g',h']}) \\& \hspace{0.5cm}  ({(^L[h,g])^{-1}}   \otimes {\langle g', h' \rangle})  ({(^L[h,g])^{-1}}   \otimes {^L[g',h']}) 
		\end{split}
	\end{equation*}
	Put the value of $(^h{g}g^{-1})^{-1} \otimes {^{g'}{h'}{h'}^{-1}}$ in equation (1), we get desired result
	\begin{equation*}
		\begin{split}
			\hspace{-.1cm} ^L[(g\otimes h), (g^{\prime}\otimes h^{\prime})] & = {^{{{^L[h,g]^{-1}}}{\langle g', h' \rangle}}}({\langle h, g \rangle}^{-1}   \otimes  {^L[g',h']})   ({^L[h,g]^{-1}}   \otimes {\langle g', h' \rangle}) \\& \hspace{0.5cm}  ({^L[h,g]^{-1}}   \otimes {^L[g',h']}).
		\end{split}
	\end{equation*}
\end{proof}

We can now establish upper bounds on the Lie nilpotency class and Lie solvability length of $ \frac{G \otimes H}{I},$ for some ideal $I$ of $G \otimes H,$ in terms of the Lie nilpotency class and Lie solvability length of $H.$ To begin, we provide upper bounds on the Lie nilpotency class and Lie solvability length of  $ \frac{G\otimes H}{I},$ for some ideal $I$ of $G \otimes H.$ 

\begin{theorem}\label{nilotent}
	Let $G$ and $H$ be multiplicative Lie algebras act compatibly on each other. If $ H$ is Lie nilpotent  of class $n,$ then $ \frac{G\otimes H}{^L[H, G] \otimes \langle G, H \rangle }$ is also Lie nilpotent of class $ \leq n+1 $ .  
	\end{theorem}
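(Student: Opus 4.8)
The statement is equivalent to the inclusion $L_{n+1}(G\otimes H)\subseteq I$, where $I:={}^L[H,G]\otimes\langle G,H\rangle$ (an ideal of $G\otimes H$ by the preceding theorem, since $^L[H,G]$ is an $H$-invariant ideal of $G$ and $\langle G,H\rangle$ a $G$-invariant ideal of $H$); this inclusion says precisely that the lower Lie central series of $\frac{G\otimes H}{I}$ dies at step $n+1$. The driving observation is Lemma \ref{Dee}: in the displayed product for $^L[(g\otimes h),(g'\otimes h')]$ the middle factor ${}^L[h,g]^{-1}\otimes\langle g',h'\rangle$ lies in $I$, because its left entry is in $^L[H,G]$ and its right entry in $\langle G,H\rangle$. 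Hence, modulo $I$, every basic Lie commutator of simple tensors is congruent to a product of tensors whose \emph{second} entry is ${}^L[g',h']\in{}^L[G,H]$.

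The plan is to prove by induction on $j\ge 0$ that, modulo $I$, the subgroup $L_{1+j}(G\otimes H)$ is generated by images of simple tensors $a\otimes c$ with $c\in L_j({}^L[G,H])$, where $L_j({}^L[G,H])$ denotes the $j$-th term of the lower Lie central series of the ideal $^L[G,H]$ of $H$. The case $j=0$ is the observation above. For the step from $j$ to $j+1$ I would apply Lemma \ref{Dee} once more to $^L[(g\otimes h),(a\otimes c)]$, discard the middle factor (which is in $I$), and read off that the surviving factors all carry second entry ${}^L[a,c]$; reducing a Lie commutator with a product $w=\prod_i(a_i\otimes c_i)$ to Lie commutators with the individual factors is done with identities (3)--(4) of Proposition \ref{Lie}, while staying inside the prescribed family of tensors is guaranteed by the fact that conjugation by $G\otimes H$ acts on the second entry through ${}^{[g'',h'']}(\cdot)$ (Propositions \ref{s2}, \ref{s1}), which preserves the ideal $L_{j+1}({}^L[G,H])$ of $H$.

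The heart of the matter, and the step I expect to be hardest, is the sub-lemma that ${}^L[a,c]\in L_{j+1}({}^L[G,H])$ whenever $c\in L_j({}^L[G,H])$ and $a$ is a first entry produced at the previous stage (the principal case being $a\in{}^L[H,G]$). The difficulty is that ${}^L[a,c]=\langle a,c\rangle^{-1}({}^{a}c)c^{-1}$ is a \emph{cross} Lie commutator between $G$ and $H$, whereas $L_{j+1}({}^L[G,H])$ is generated by \emph{internal} Lie commutators inside $H$. Here the correspondence built in Section 3 is exactly the tool: to $a\in{}^L[H,G]$ it assigns $\tilde a\in{}^L[G,H]$ with ${}^{\tilde a}c={}^{a}c$ (Proposition \ref{R2}) and $\tilde a\star c=\langle a,c\rangle$ (Lemma \ref{R4}, whose explicit correspondence is the very same one), and therefore ${}^L[a,c]=\langle a,c\rangle^{-1}({}^{a}c)c^{-1}=(\tilde a\star c)^{-1}({}^{\tilde a}c)c^{-1}={}^L[\tilde a,c]\in L_{j+1}({}^L[G,H])$. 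The remaining bookkeeping is to push every first entry that a priori lands in $\langle H,G\rangle$ or $^HG$ back to this case: I would rewrite $\langle h,g\rangle^{-1}={}^L[h,g]\,(g\,{}^{h}g^{-1})$, expand with the tensor relation (2) of Definition \ref{s3}, use Proposition \ref{s1} (4)--(5) to recognise the $^HG$- and $^GH$-entries as genuine commutators in $G\otimes H$, and invoke Remark \ref{d1} to discard the parts that act trivially.

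Finally I would close the induction by noting that $^L[G,H]$, being an ideal of the Lie nilpotent algebra $H$ of class $n$, satisfies $L_k({}^L[G,H])\subseteq L_k(H)$ for every $k$, so $L_n({}^L[G,H])\subseteq L_n(H)=\{1\}$. Taking $j=n$ in the claim, every generator of $L_{n+1}(G\otimes H)$ is, modulo $I$, a tensor $a\otimes 1=1$; thus $L_{n+1}(G\otimes H)\subseteq I$ and $\frac{G\otimes H}{I}$ is Lie nilpotent of class at most $n+1$, as asserted.
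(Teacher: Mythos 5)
Your overall architecture coincides with the paper's: both arguments hinge on Lemma \ref{Dee}, discard the factor ${}^L[h,g]^{-1}\otimes\langle g',h'\rangle$ modulo $I={}^L[H,G]\otimes\langle G,H\rangle$, and then run an induction in which the second tensor entries descend through a central series that terminates because $H$ is Lie nilpotent. The gap is in your choice of inductive invariant, which does not close. You track $L_j({}^L[G,H])$, the lower central series of the ideal ${}^L[G,H]$ taken in itself, so your key sub-lemma demands ${}^L[a,c]\in L_{j+1}({}^L[G,H])={}^L[{}^L[G,H],L_j({}^L[G,H])]$, i.e.\ the converted \emph{internal} commutator must have its first entry inside ${}^L[G,H]$. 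The correspondence of Proposition \ref{R2} and Lemma \ref{R4} delivers this only for first entries $a\in{}^L[H,G]$ -- the case you call principal. But Lemma \ref{Dee} also produces the first entry $\langle h,g\rangle^{-1}\in\langle H,G\rangle$, and for it the compatibility conditions turn ${}^L[\langle h,g\rangle^{-1},c]$ into ${}^L[\langle g,h\rangle,c]$, whose first entry $\langle g,h\rangle$ lies in $\langle G,H\rangle$, not in ${}^L[G,H]$. Your proposed splitting $\langle h,g\rangle^{-1}={}^L[h,g]\,(g\,{}^hg^{-1})$ only relocates the problem: the leftover factor gives ${}^L[g\,{}^hg^{-1},c]={}^L[{}^ghh^{-1},c]$ with first entry in ${}^GH$, again outside ${}^L[G,H]$, and Remark \ref{d1} (trivial action of ${}^L[G,H]$ on $\langle H,G\rangle$, commutation with $\langle G,H\rangle$) neither kills these commutators nor places them in $L_{j+1}({}^L[G,H])$. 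What you actually obtain at stage $j+1$ lies only in ${}^L[H,L_j(\,\cdot\,)]$, so the induction stalls at exactly the step you flag as the heart of the matter.

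The repair is the paper's weaker invariant: record that the first entries stay in $\langle H,G\rangle$ and ${}^L[H,G]$, but require of the second entries only that they descend through $L_j(H)$, the lower central series of $H$ itself. Then the cross-to-internal conversion of ${}^L[a,c]$ needs nothing more than \emph{some} element of $H$ in the first slot, which the compatibility conditions (2), (3), (5) of Definition \ref{1} supply for every first entry that occurs, and ${}^L[\tilde a,c]\in{}^L[H,L_{j-1}(H)]=L_j(H)$ is immediate; since $L_n(H)=1$, your termination argument then gives $L_{n+1}(G\otimes H)\subseteq I$ exactly as before. The remaining ingredients of your plan -- reducing ${}^L[t,w]$ to simple tensors via Proposition \ref{Lie}(3)--(4), conjugation acting through ${}^{[g,h]}(\cdot)$, and the final observation $L_n(\,\cdot\,)\subseteq L_n(H)=\{1\}$ -- match the paper and are fine.
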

\begin{proof}
	To prove our theorem, it suffices to show $L_{n + 1}( \frac{G\otimes H}{K}) \subseteq   \frac{(({\langle H, G \rangle\otimes L_{n}( H))(^L[H, G]\otimes L_{n}(H))})K}{K},$ where $K = {^L[H, G] \otimes \langle G, H \rangle }.$  We proceed the proof by induction on $n.$ Assume $n = 0,$
	
Let $ ^L[t_1K, t_2K] \in{^L[\frac{G\otimes H}{K}, \frac{G\otimes H}{K} ]} = L_{1}(\frac{G\otimes H}{K})$, where $t_1, t_2 \in  G\otimes H$. Suppose $t_1 = g\otimes h$ and $t_2 = g^{\prime}\otimes h^{\prime}$. Then by Lemma \ref{Dee}, Proposition \ref{p1}, and Remark \ref{d1}, we have
 \begin{equation*}
		\begin{split}
			^L[(g\otimes h)K, (g^{\prime}\otimes h^{\prime})K] & = {^L[(g\otimes h), (g^{\prime}\otimes h^{\prime})]K} \\
			& \hspace{-3cm}=   {^{{{(^L[h,g])^{-1}}}{\langle g', h' \rangle}}}({\langle h, g \rangle}^{-1}   \otimes  {^L[g',h']})   ({(^L[h,g])^{-1}}   \otimes {\langle g', h' \rangle}) ({(^L[h,g])^{-1}}   \otimes {^L[g',h']})K\\ 
			&\hspace{-3cm}= {^{{{(^L[h,g])^{-1}}}{\langle g', h' \rangle}}}({\langle h, g \rangle}^{-1}   \otimes  {^L[g',h']})   ^{({(^L[h,g])^{-1}}   \otimes {\langle g', h' \rangle})}({(^L[h,g])^{-1}}   \otimes {^L[g',h']}) \\& \hspace{-2.4cm} ({(^L[h,g])^{-1}}   \otimes {\langle g', h' \rangle})K\\ 
			&\hspace{-3cm}= {^{{{(^L[h,g])^{-1}}}{\langle g', h' \rangle}}}({\langle h, g \rangle}^{-1}   \otimes  {^L[g',h']})   ^{[{(^L[h,g])^{-1}}, {\langle g', h' \rangle}]}({(^L[h,g])^{-1}}   \otimes {^L[g',h']})K\\ 
			&\hspace{-3cm}= {^{{{(^L[h,g])^{-1}}}{\langle g', h' \rangle}}}({\langle h, g \rangle}^{-1}   \otimes  {^L[g',h']})({(^L[h,g])^{-1}}   \otimes {^L[g',h']})K \hspace{3.4cm}{(1)}
			\end{split}
	\end{equation*}
	This shows that $^L[(g\otimes h)K, (g^{\prime}\otimes h^{\prime})K]  \in   \frac{({\langle H, G \rangle\otimes L_0(H)})(^L[H, G]\otimes L_0(H))K}{K}.$
	
	 Since $L_{1}(\frac{G\otimes H}{K})$ is generated by the elements of the type $^L[(g\otimes h)K, (g^{\prime}\otimes h^{\prime})K]$, $ ^L[t_1K, t_2K]\in \frac{({\langle H, G \rangle\otimes L_0(H)})(^L[H, G]\otimes L_0(H))K}{K}.$ Thus $L_{1}( \frac{G\otimes H}{K}) \subseteq  \frac{({\langle H, G \rangle\otimes L_0(H)})(^L[H, G]\otimes L_0(H))K}{K}.$

	Now suppose $L_{n}( \frac{G\otimes H}{K}) \subseteq   \frac{(({\langle H, G \rangle\otimes L_{n-1}( H))(^L[H, G]\otimes L_{n-1}(H))})K}{K}.$ Let $P \in {^L[\frac{G\otimes H}{K}, L_{n}(\frac{G\otimes H}{K}) ]} = L_{n + 1}(\frac{G\otimes H}{K}). $ By the induction hypothesis, $P = {^L[\alpha K,  ({\langle h', g' \rangle} \otimes   L_{n-1}(h_2))( {^L[h,g]} \otimes  L_{n-1}(h_1))K]},$ for some $\alpha \in G\otimes H$ and $L_{n-1}(h_2), L_{n-1}(h_2) \in L_{n-1}(H) $. Suppose $\alpha = x\otimes y$. Then by Proposition \ref{Lie}, we have	
	\begin{equation*}
		\begin{split}
		\hspace{1cm}P & = {^L[(x\otimes y)K,  ({\langle h', g' \rangle} \otimes  L_{n-1}(h_2))({^L[h,g]} \otimes  L_{n-1}(h_1))K]}\\   
		&= {^{b}{(^L[a, c])} \ ^{[^{b}c, ^{b}a]}({^L[a, b]})}K, \hspace{9cm} (2)
		\end{split}
	\end{equation*}

 where $ a = x\otimes y, \ b = {\langle h', g' \rangle} \otimes  L_{n-1}(h_2), \text{ and}\  c ={^L[h,g]} \otimes  L_{n-1}(h_1).$\\
 Now, by equation (1), we have  
	\begin{equation*}
		\begin{split}
			^L[a, b] &= {^L[(x\otimes y), ({\langle h', g' \rangle} \otimes  L_{n-1}(h_2))]} \\
			&= {^{{{(^L[y,x])^{-1}}}{\langle {\langle h', g' \rangle}, L_{n-1}(h_2) \rangle}}}({\langle y, x \rangle}^{-1}   \otimes  {^L[{\langle h', g' \rangle},L_{n-1}(h_2)]})\\& \hspace{0.6cm}({(^L[y,x])^{-1}}   \otimes {^L[{\langle h', g' \rangle},L_{n-1}(h_2)]}) \\
			&= {^{t}}({\langle y, x \rangle}^{-1}   \otimes {\langle {\langle h', g' \rangle}, L_{n-1}(h_2) \rangle}^{-1}  {^{\langle h', g' \rangle}L_{n-1}(h_2)}{L_{n-1}(h_2)^{-1}})\\& \hspace{0.6cm}({(^L[y,x])^{-1}}   \otimes {\langle {\langle h', g' \rangle}, L_{n-1}(h_2) \rangle}^{-1}  {^{\langle h', g' \rangle}L_{n-1}(h_2)}{L_{n-1}(h_2)^{-1}}) \\
		    &= {^{t}}({\langle y, x \rangle}^{-1}   \otimes ({ {\langle g', h' \rangle}^{-1}\star L_{n-1}(h_2))}^{-1}  {^{\langle g', h' \rangle^{-1}}L_{n-1}(h_2)}{L_{n-1}(h_2)^{-1}})\\& \hspace{0.6cm}({(^L[y,x])^{-1}}   \otimes({\langle {\langle g', h' \rangle}^{-1}\star L_{n-1}(h_2))}^{-1}  {^{\langle g', h' \rangle^{-1}}L_{n-1}(h_2)}{L_{n-1}(h_2)^{-1}}) \\
		     &= {^{t}}({\langle y, x \rangle}^{-1}   \otimes {^L[{\langle g', h' \rangle}^{-1}, L_{n-1}(h_2)}]  ) ({(^L[y,x])^{-1}}   \otimes {^L[{\langle g', h' \rangle}^{-1}, L_{n-1}(h_2)]}),
		\end{split}
	\end{equation*}
	where $t= {{(^L[y,x])^{-1}}}{\langle {\langle h', g' \rangle}, L_{n-1}(h_2) \rangle}.$ Similarly by equation $(1)$, we can calculate $^L[a, b]$ by using compatibility conditions. Thus
	\begin{equation*}
		\begin{split}
			\hspace{-.2cm} ^L[a, c] &= {^L[(x\otimes y), ({^L[h,g]} \otimes  L_{n-1}(h_1))]} \\
				&= {^{s}}({\langle y, x \rangle}^{-1}   \otimes  {^L[{\langle g, h \rangle}h{^gh^{-1}},L_{n-1}(h_1)]})({(^L[y,x])^{-1}}   \otimes {^L[{\langle g, h \rangle}h{^gh^{-1}},L_{n-1}(h_1)]}),
		\end{split}
	\end{equation*}
where $s = {{(^L[y,x])^{-1}}}{\langle {^L[h,g]}, L_{n-1}(h_1) \rangle}.$ Now put the value of $^L[a, b]$ and $^L[a, c]$ in equation (2), we get
	\begin{equation*}
		\begin{split}
		    P & =  {^{b}} ({^{s}}({\langle y, x \rangle}^{-1}   \otimes  {^L[{\langle g, h \rangle}h{^gh^{-1}},L_{n-1}(h_1)]})({(^L[y,x])^{-1}}   \otimes {^L[{\langle g, h \rangle}h{^gh^{-1}},L_{n-1}(h_1)]}))\\ & \hspace{0.8cm} {^{[^{b}c, ^{b}a]}} ({^{t}}({\langle y, x \rangle}^{-1}   \otimes {^L[{\langle g', h' \rangle}^{-1}, L_{n-1}(h_2)}]  ) ({(^L[y,x])^{-1}}   \otimes {^L[{\langle g', h' \rangle}^{-1}, L_{n-1}(h_2)]})) K \\
		    & =  {^{bs}}({\langle y, x \rangle}^{-1}   \otimes  {^L[{\langle g, h \rangle}h{^gh^{-1}},L_{n-1}(h_1)]}){^{{^{b}}({(^L[y,x])^{-1}}   \otimes {^L[{\langle g, h \rangle}h{^gh^{-1}},L_{n-1}(h_1)]})[^{b}c, ^{b}a]}} ({^{t}}({\langle y, x \rangle}^{-1}   \otimes \\ & \hspace{0.8cm}{^L[{\langle g', h' \rangle}^{-1}, L_{n-1}(h_2)}]  ) ({(^L[y,x])^{-1}}   \otimes {^L[{\langle g', h' \rangle}^{-1}, L_{n-1}(h_2)]}))  {^{b}}({(^L[y,x])^{-1}}   \otimes \\& \hspace{0.8cm} {^L[{\langle g, h \rangle}h{^gh^{-1}},L_{n-1}(h_1)]})  K \\
		    & =  {^{bs}}({\langle y, x \rangle}^{-1}   \otimes  {^L[{\langle g, h \rangle}h{^gh^{-1}},L_{n-1}(h_1)]}){^{{^{b}}({[^L[y,x])^{-1}}, {^L[{\langle g, h \rangle}h{^gh^{-1}},L_{n-1}(h_1)]}] [^{b}c, ^{b}a]}} ({^{t}}({\langle y, x \rangle}^{-1}   \otimes \\ & \hspace{0.8cm}{^L[{\langle g', h' \rangle}^{-1}, L_{n-1}(h_2)}]  ) ({(^L[y,x])^{-1}}   \otimes {^L[{\langle g', h' \rangle}^{-1}, L_{n-1}(h_2)]}))  {^{b}}({(^L[y,x])^{-1}}   \otimes \\& \hspace{0.8cm} {^L[{\langle g, h \rangle}h{^gh^{-1}},L_{n-1}(h_1)]})  K \\
		     & =  {^{bs}}({\langle y, x \rangle}^{-1}   \otimes  {^L[{\langle g, h \rangle}h{^gh^{-1}},L_{n-1}(h_1)]}){^{z}} ({^{t}}({\langle y, x \rangle}^{-1}   \otimes {^L[{\langle g', h' \rangle}^{-1}, L_{n-1}(h_2)}]  )\\ & \hspace{0.8cm} ({(^L[y,x])^{-1}}   \otimes {^L[{\langle g', h' \rangle}^{-1}, L_{n-1}(h_2)]}))  {^{b}}({(^L[y,x])^{-1}}   \otimes {^L[{\langle g, h \rangle}h{^gh^{-1}},L_{n-1}(h_1)]})  K \\ &\in \frac{({\langle H, G \rangle\otimes L_{n}( H)})(^L[H, G]\otimes L_{n}(H))K}{K},
		\end{split}
	\end{equation*}
	where, $z = {^b}({[^L[y,x])^{-1}}, {^L[{\langle g, h \rangle}h{^gh^{-1}},L_{n-1}(h_1)]}]) [^{b}c, ^{b}a].$ Similarly, we can see that $P = {^L[\alpha K,  ({\langle h', g' \rangle} \otimes   L_{n-1}(h_2))( {^L[h,g]} \otimes  L_{n-1}(h_1))K]}\in \frac{({\langle H, G \rangle\otimes L_{n}( H)})(^L[H, G]\otimes L_{n}(H))K}{K}.$ Therefore, $L_{n + 1}( \frac{G\otimes H}{K}) \subseteq   \frac{(({\langle H, G \rangle\otimes L_{n}( H))(^L[H, G]\otimes L_{n}(H))})K}{K},$ for every $n \in \mathbb{N}.$
	
If $ H$ is Lie nilpotent  of class $n,$	then $L_n (H)= 1,$. So $L_{n + 1}( \frac{G\otimes H}{K}) = 1.$ Hence, $cl( \frac{G\otimes H}{   ^L[H, G] \otimes \langle G, H \rangle }) \leq n+1. $   
\end{proof}

\begin{remark}\begin{enumerate}
\item Like the proof of Theorem \ref{nilotent}, we can also prove that if $G$ and $H$ are multiplicative Lie algebras act compatibly on each other and $ H$ is Lie solvable  of class $n,$ then $ \frac{G\otimes H}{^L[H, G] \otimes \langle G, H \rangle }$ is also Lie solvable of class $ \leq n+1 $.  
\item Suppose $G=H$ and $\langle, \rangle = \star$. Then $\frac{G\otimes G}{^L[G, G] \otimes (G\star G)}$ is Lie nilpotent (Lie solvable) provided $G$ is Lie nilpotent (Lie solvable).
\item Since every element of the ideal $(G\star G) \otimes ^L[G, G]$ can be written as product of the elements of the ideal $^L[G, G] \otimes (G\star G)$, by Lemma \ref{Dee}, it is easy to see that $L_{n}( \frac{G\otimes G}{K}) \subseteq   \frac{(G\otimes L_{n}(G))K}{K},$ where $K= {^L[G, G] \otimes {^L[G, G]}}.$ Hence, if $G$  is a Lie nilpotent of class $n$, then $ \frac{G\otimes G}{^L[G, G] \otimes {^L[G, G]}}$  is also Lie nilpotent of class $n$.  Similarly, if $G$  is a Lie solvable, then $ \frac{G\otimes G}{^L[G, G] \otimes {^L[G, G]}}$  is also Lie solvable. 
\end{enumerate}
 
\end{remark} 

\noindent{\bf Acknowledgement:}
The first named author sincerely thanks IIIT Allahabad and Ministry of Education, Government of India for providing institute fellowship. The second named author sincerely thanks IIIT Allahabad and University grant commission (UGC), Govt. of India, New Delhi for research fellowship.

\end{document}